\newcolumntype{L}{>{$}l<{$}}
\newcolumntype{R}{>{$}r<{$}}
\newtheorem{thm}{Theorem}[section]
\newtheorem{lemma}[thm]{Lemma}
\newtheorem{cor}[thm]{Corollary}
\newtheorem{prop}[thm]{Proposition}
\newtheorem{defi}[thm]{Definition}
\theoremstyle{definition}
\newtheorem*{defn}{Definition}
\numberwithin{equation}{section}
\def\ZZ{\mathbb{Z}}
\def\QQ{\mathbb{Q}}
\def\AA{\mathbb{A}}
\def\FF{\mathbb{F}}
\def\RR{\mathbb{R}}
\def\mathstrut{\vphantom(}
\newcommand\PConf{\mathrm{PConf}}
\def\sgn{\mathrm{sgn}}
\newcommand{\poly}{\mathrm{Poly}}
\newcommand{\irr}{\mathrm{Irr}}
\newcommand{\stir}[2]{\genfrac{\lbrack}{\rbrack}{0pt}{}{#1}{#2}}
\newcommand{\sfr}{\mathrm{sf}}
\def\multi#1#2{\ensuremath{\left(\kern-.3em\left(\genfrac{}{}{0pt}{}{#1}{#2}\right)\kern-.3em\right)}}
\begin{document}

\title{Liminal  Reciprocity and Factorization Statistics}

\author{Trevor Hyde}
\thanks{The author was partially supported by DMS-1701576.}
\address{Dept. of Mathematics\\
University of Michigan \\
Ann Arbor, MI 48109-1043\\
}
\email{tghyde@umich.edu}

\date{July 10th, 2018}

\maketitle

\begin{abstract}
    Let $M_{d,n}(q)$ denote the number of monic irreducible polynomials in $\FF_q[x_1, x_2, \ldots, x_n]$ of degree $d$. We show that for a fixed degree $d$, the sequence $M_{d,n}(q)$ converges coefficientwise to an explicitly determined rational function $M_{d,\infty}(q)$. The limit $M_{d,\infty}(q)$ is related to the classic necklace polynomial $M_{d,1}(q)$ by an involutive functional equation we call \emph{liminal reciprocity}. The limiting first moments of factorization statistics for squarefree polynomials are expressed in terms of symmetric group characters as a consequence of liminal reciprocity, giving a liminal analog of the \emph{twisted Grothendieck-Lefschetz formula} of Church, Ellenberg, and Farb. 
\end{abstract}

\section{Introduction}
\label{sec introduction}
Let $\FF_q$ be a field with $q$ elements. How many irreducible polynomials of degree $d$ are there in $\FF_q[x_1, x_2, \ldots, x_n]$? Let $M_{d,n}(q)$ denote the number of irreducible polynomials in $\FF_q[x_1, x_2, \ldots, x_n]$ of total degree $d$ which are monic with respect to some fixed monomial ordering ($M_{d,n}(q)$ is independent of the choice of monomial ordering. See Section \ref{section factorization} for details.) When $n = 1$, $M_{d,1}(q)$ is given by the \emph{$d$th necklace polynomial}
\begin{equation}
\label{necklace 1}
    M_{d,1}(q) := \frac{1}{d}\sum_{e\mid d}\mu(d/e) q^{e},
\end{equation}
where $\mu$ is the M\"{o}bius function. There does not appear to be a simple analog of \eqref{necklace 1} for $M_{d,n}(q)$ when $n > 1$. In Lemma \ref{lem1} $M_{d,n}(q)$ is shown to be a recursively computable polynomial in $q$ for all $n\geq 1$. The table below gives the low degree terms of $M_{3,n}(q)$ for small $n$.

\begin{center}
\begin{tabular}{|c|l|}
    \hline
    $n$ & $M_{3,n}(q)$\\
    \hline
    1 & ${-\frac{1}{3}}q + \frac{1}{3} q^{3}$\\
    \hline
    2  & ${-\frac{1}{3}} q  -\frac{1}{3} q^{2} + \frac{1}{3} q^{3}  -q^{5}  -\frac{2}{3} q^{6} + \ldots$\\
    \hline
    3  & ${-\frac{1}{3}} q  -\frac{1}{3}q^{2} + q^{4} + q^{5} + \frac{1}{3} q^{6} -q^{7} + \ldots$\\
    \hline
    4  & $ {-\frac{1}{3}} q  -\frac{1}{3} q^{2} + \frac{2}{3} q^{4} + 2 q^{5} + \frac{7}{3} q^{6} + 2 q^{7} + \ldots$\\
    \hline
    5  & $ {-\frac{1}{3}} q  -\frac{1}{3} q^{2} + \frac{2}{3} q^{4} + \frac{5}{3} q^{5} + \frac{10}{3} q^{6} + 4 q^{7} + \ldots$\\
    \hline
    6  & $ {-\frac{1}{3}} q  -\frac{1}{3} q^{2} + \frac{2}{3} q^{4} + \frac{5}{3} q^{5} + 3 q^{6} + 5 q^{7} + \ldots$\\
    \hline
    7  & $ {-\frac{1}{3}} q  -\frac{1}{3} q^{2} + \frac{2}{3} q^{4} + \frac{5}{3} q^{5} + 3 q^{6} + \frac{14}{3} q^{7} + \ldots$\\
    \hline
\end{tabular}
\end{center}

The table suggests that the sequence of polynomials $M_{3,n}(q)$ converge coefficientwise as the number of variables $n$ increases. We show that for any degree $d$, the sequence of polynomials $M_{d,n}(q)$ converges coefficientwise to a rational function $M_{d,\infty}(q)$ as $n\rightarrow\infty$. The limit $M_{d,\infty}(q)$ has an expression closely related to \eqref{necklace 1}.
\begin{thm}
\label{thm intro 1}
Let $M_{d,n}(q)$ be the number of irreducible degree $d$ polynomials in $\FF_q[x_1, x_2, \ldots, x_n]$ which are monic with respect to some fixed monomial ordering. Then $M_{d,n}(q)$ is a polynomial in $q$ and for each $d\geq 1$ the sequence of polynomials $M_{d,n}(q)$ converges coefficientwise (that is, with respect to the $q$-adic topology) in the formal power series ring $\QQ\llbracket q\rrbracket$ to the rational function
\[
    M_{d,\infty}(q) := -\frac{1}{d}\sum_{e\mid d} \mu(d/e)\left(\frac{1}{1-\frac{1}{q}}\right)^{e}.
\]
In particular $M_{d,\infty}(q)$ satisfies the functional equation,
\begin{equation}
\label{eqn recip intro}
    M_{d,\infty}(q) = - M_{d,1}\Big(\tfrac{1}{1-\frac{1}{q}}\Big).
\end{equation}
Furthermore the rate of convergence of $M_{d,n}(q)$ is bounded by the congruence
\[
    M_{d,n}(q) \equiv M_{d,\infty}(q) \bmod q^{n + 1}.
\]
\end{thm}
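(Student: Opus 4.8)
The plan is to reduce the statement to a single $q$-adic estimate on a generating function. Since $\FF_q[x_1,\ldots,x_n]$ is a unique factorization domain and the leading coefficient of a product is the product of the leading coefficients (for any monomial ordering, by compatibility of the order with multiplication), every monic polynomial factors uniquely into monic irreducibles. Hence the number $b_{d,n}$ of monic polynomials of total degree $d$ satisfies the Euler product
\[
  B_n(u) := \sum_{d\geq 0} b_{d,n}\,u^d = \prod_{d\geq 1}(1-u^d)^{-M_{d,n}(q)}.
\]
Taking logarithms and applying M\"obius inversion to the coefficients of $\log B_n(u)$ gives
\[
  M_{d,n}(q) = \frac{1}{d}\sum_{e\mid d}\mu(d/e)\,e\,[u^e]\log B_n(u),
\]
which (together with Lemma \ref{lem1}) already records $M_{d,n}(q)$ as a polynomial in $q$. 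It therefore suffices to understand the $q$-adic limit of the coefficients of $\log B_n(u)$.

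First I would compute $b_{d,n}$ explicitly. Each nonzero polynomial of total degree $d$ has a unique monic scalar multiple, so $b_{d,n} = \frac{q^{N_n(d)}-q^{N_n(d-1)}}{q-1}$, where $N_n(d)=\binom{n+d}{d}$ counts monomials of degree at most $d$ (with $N_n(-1)=0$). Writing this as $b_{d,n} = q^{N_n(d-1)}\cdot\frac{q^{a}-1}{q-1}$ with $a=\binom{n+d-1}{d}$, I read off the $q$-adic valuation $v_q(b_{d,n})\geq N_n(d-1)=\binom{n+d-1}{d-1}$. The decisive observation is that this valuation is at least $n+1$ as soon as $d\geq 2$, while $b_{1,n}=q+q^2+\cdots+q^n\equiv \frac{q}{1-q}\bmod q^{n+1}$. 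Consequently, coefficientwise in $u$,
\[
  B_n(u)\equiv B_\infty(u) := 1 + \frac{q}{1-q}\,u \pmod{q^{n+1}}.
\]

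The remaining steps are formal. Congruence modulo $q^{n+1}$ in $\QQ\llbracket q\rrbracket$ is preserved under $\QQ$-linear combinations and products, hence under the operations defining $\log$; since $[u^e]\log B_n(u)$ is a fixed finite $\QQ$-polynomial in $b_{1,n},\ldots,b_{e,n}$, I obtain $[u^e]\log B_n(u)\equiv[u^e]\log B_\infty(u)\bmod q^{n+1}$ for every $e$, and therefore $M_{d,n}(q)\equiv M_{d,\infty}(q)\bmod q^{n+1}$, proving both coefficientwise convergence and the claimed rate. Finally, because $B_\infty(u)=1+\frac{q}{1-q}u$ is linear in $u$, I have $[u^e]\log B_\infty(u)=\frac{(-1)^{e-1}}{e}\big(\frac{q}{1-q}\big)^e$, so the M\"obius formula collapses to
\[
  M_{d,\infty}(q) = \frac{1}{d}\sum_{e\mid d}\mu(d/e)(-1)^{e-1}\Big(\tfrac{q}{1-q}\Big)^e = -\frac{1}{d}\sum_{e\mid d}\mu(d/e)\Big(\tfrac{1}{1-1/q}\Big)^e,
\]
which matches the stated expression; the functional equation $M_{d,\infty}(q)=-M_{d,1}\big(\tfrac{1}{1-1/q}\big)$ is then immediate from \eqref{necklace 1}.

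The main obstacle, or rather the one genuinely new idea, is the valuation bookkeeping in the second paragraph: recognizing that in the $q$-adic topology all higher coefficients $b_{d,n}$ with $d\geq 2$ vanish to order $q^{n+1}$, so that the limiting generating function degenerates to the linear series $1+\frac{q}{1-q}u$, and checking that the single surviving threshold $N_n(1)=n+1$ is exactly what controls the sharp rate. Once this collapse is identified, everything else is a routine transport of congruences through $\log$ and M\"obius inversion.
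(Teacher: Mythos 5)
Your proof is correct and follows essentially the same route as the paper: both arguments hinge on the observation that the generating function $\sum_{d\geq 0}P_{d,n}(q)t^d$ collapses $q$-adically to the linear series $1-\tfrac{1}{1-1/q}t$ modulo $q^{n+1}$, and then transport this through the Euler product $\prod_{d\geq 1}(1-t^d)^{-M_{d,n}(q)}$. The only cosmetic differences are that you extract the explicit formula by taking $\log$ and M\"obius-inverting where the paper instead substitutes $q\mapsto\tfrac{1}{1-1/q}$ into the cyclotomic identity and compares exponents, and that you obtain convergence and the rate $\bmod\ q^{n+1}$ in a single congruence argument rather than in two separate steps.
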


The fractional linear transformation $q \mapsto \frac{1}{1 - \frac{1}{q}}$ is an involution, hence \eqref{eqn recip intro} is equivalent to
\[
    M_{d,1}(q) = - M_{d,\infty}\Big(\tfrac{1}{1-\frac{1}{q}}\Big).
\]    
This involutive functional equation relating irreducible polynomial counts in one and infinitely many variables is the first instance of a phenomenon we call \emph{liminal reciprocity}.

\subsection{Liminal reciprocity for type polynomials}
Let $\poly_{d,n}(\FF_q)$ denote the set of polynomials in $\FF_q[x_1, x_2,\ldots, x_n]$ of total degree $d$ which are monic with respect to some fixed monomial ordering. Since the polynomial ring $\FF_q[x_1,x_2,\ldots, x_n]$ has unique factorization, each $f \in \poly_{d,n}(\FF_q)$ has a well-defined \emph{factorization type}. The factorization type of a polynomial $f\in \poly_{d,n}(\FF_q)$ is the partition $\lambda \vdash d$ given by the degrees of the $\FF_q$-irreducible factors of $f$. 

The factorization type does not record the multiplicities of individual factors, only the degrees of the irreducible factors. For example, the polynomials $x^2$ and $x(x + 1)$ both have factorization type $(1^2)$ since they each have two linear factors.

\begin{defn}
If $\lambda \vdash d$ is a partition, then the \emph{$\lambda$-type polynomial} $T_{\lambda,n}(q)$ is the number of elements in $\poly_{d,n}(\FF_q)$ with factorization type $\lambda$. Similarly the \emph{squarefree $\lambda$-type polynomial} $T_{\lambda,n}^\sfr(q)$ is the number of squarefree elements in $\poly_{d,n}(\FF_q)$ with factorization type $\lambda$. The type polynomials may be expressed in terms of $M_{d,n}(q)$ as
\[
    T_{\lambda,n}(q) = \prod_{j\geq 1} \multi{M_{j,n}(q)}{m_j(\lambda)}\hspace{.5in}
    T_{\lambda,n}^\sfr(q) = \prod_{j\geq 1} \binom{M_{j,n}(q)}{m_j(\lambda)},
\]
where $m_j(\lambda)$ is the number of parts of $\lambda$ of size $j$, $\binom{x}{m} = \frac{1}{m!}x(x-1)\cdots(x-m +1)$ is the usual binomial coefficient, and $\multi{x}{m} = \frac{1}{m!}x(x + 1)\cdots (x + m - 1)$. Recall that $\binom{x}{m}$ counts the number of subsets of size $m$ in a set of size $x$ and $\multi{x}{m}$ counts the number of subsets of size $m$ with repetition in a set of size $x$.
\end{defn}

It follows from Theorem \ref{thm intro 1} that the coefficientwise limits 
\[
    T_{\lambda,\infty}(q) = \lim_{n\rightarrow\infty}T_{\lambda,n}(q) \hspace{.5in}
    T_{\lambda,\infty}^\sfr(q) = \lim_{n\rightarrow\infty}T_{\lambda,n}^\sfr(q)
\]
converge to rational functions. Our next result is a version of liminial reciprocity for type polynomials.

\begin{thm}[Liminal reciprocity]
\label{thm type reciprocity}
Let $\lambda$ be a partition and let $\ell(\lambda) = \sum_{j\geq 1}m_j(\lambda)$ be the number of parts of $\lambda$. Then the following identities hold in $\QQ(q)$,
\begin{align*}
    T_{\lambda,\infty}(q) &= (-1)^{\ell(\lambda)} T_{\lambda,1}^\sfr\Big(\tfrac{1}{1-\frac{1}{q}}\Big)\\
    T_{\lambda,\infty}^\sfr(q) &= (-1)^{\ell(\lambda)} T_{\lambda,1}\Big(\tfrac{1}{1-\frac{1}{q}}\Big)
\end{align*}
\end{thm}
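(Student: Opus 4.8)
The plan is to reduce both identities to the single-variable reciprocity already established in Theorem~\ref{thm intro 1}, combined with a pair of elementary negation identities relating the multiset and binomial coefficients. First I would observe that both type polynomials are \emph{finite} products of the factors $\multi{M_{j,n}(q)}{m_j(\lambda)}$ and $\binom{M_{j,n}(q)}{m_j(\lambda)}$, each of which is a polynomial in $M_{j,n}(q)$. Since $M_{j,n}(q)\to M_{j,\infty}(q)$ coefficientwise as $n\to\infty$ for each fixed $j$ by Theorem~\ref{thm intro 1}, and polynomial operations are continuous in the $q$-adic topology on $\QQ\llbracket q\rrbracket$, the coefficientwise limit passes through the finite product to give
\[
    T_{\lambda,\infty}(q) = \prod_{j\geq 1}\multi{M_{j,\infty}(q)}{m_j(\lambda)},
    \qquad
    T_{\lambda,\infty}^\sfr(q) = \prod_{j\geq 1}\binom{M_{j,\infty}(q)}{m_j(\lambda)}.
\]

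Next I would record the two negation identities, both of which follow immediately by writing out the $m$ consecutive factors and extracting a sign $(-1)^m$ from reversing the direction of the product:
\[
    \multi{-x}{m} = (-1)^m\binom{x}{m},
    \qquad
    \binom{-x}{m} = (-1)^m\multi{x}{m}.
\]
These identities are precisely the bridge between the two flavors of coefficient, so the negative sign appearing in the reciprocity $M_{j,\infty}(q) = -M_{j,1}\big(\tfrac{1}{1-1/q}\big)$ from Theorem~\ref{thm intro 1} will convert a product of $\multi{\,\cdot\,}{\,\cdot\,}$'s into a product of $\binom{\,\cdot\,}{\,\cdot\,}$'s, and vice versa.

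Combining the two steps, I substitute $M_{j,\infty}(q) = -M_{j,1}(\sigma(q))$, where $\sigma(q) = \tfrac{1}{1-1/q}$, and apply the first negation identity factorwise:
\[
    T_{\lambda,\infty}(q) = \prod_{j\geq 1}\multi{-M_{j,1}(\sigma(q))}{m_j(\lambda)}
    = \prod_{j\geq 1}(-1)^{m_j(\lambda)}\binom{M_{j,1}(\sigma(q))}{m_j(\lambda)}.
\]
Pulling out the sign gives $\prod_{j}(-1)^{m_j(\lambda)} = (-1)^{\sum_j m_j(\lambda)} = (-1)^{\ell(\lambda)}$, and the remaining product is exactly $T_{\lambda,1}^\sfr(\sigma(q))$, yielding the first identity. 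The second identity follows by the identical computation using the companion identity $\binom{-x}{m} = (-1)^m\multi{x}{m}$.

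I do not expect a serious obstacle: the entire content is the two negation identities together with the reciprocity of Theorem~\ref{thm intro 1}. The only point meriting care is the interchange of limit and product, which is harmless because the product is finite (only finitely many $m_j(\lambda)$ are nonzero) and each factor is a continuous function of the $q$-adically convergent sequence $M_{j,n}(q)$; the resulting equalities then hold as identities of rational functions in $\QQ(q)$ after composing with the fractional linear map $\sigma$.
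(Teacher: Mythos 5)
Your proposal is correct and follows essentially the same route as the paper: pass the coefficientwise limit through the finite product to get $T_{\lambda,\infty}(q)=\prod_j\multi{M_{j,\infty}(q)}{m_j(\lambda)}$ and $T_{\lambda,\infty}^\sfr(q)=\prod_j\binom{M_{j,\infty}(q)}{m_j(\lambda)}$ (the paper's Theorem~\ref{thm converge}(3)), substitute the single-degree reciprocity $M_{j,\infty}(q)=-M_{j,1}\big(\tfrac{1}{1-1/q}\big)$, and apply the combinatorial reciprocity identities $\binom{-x}{m}=(-1)^m\multi{x}{m}$ and $\multi{-x}{m}=(-1)^m\binom{x}{m}$ factorwise to collect the sign $(-1)^{\ell(\lambda)}$. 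No gaps; this matches the paper's proof of Theorem~\ref{thm main reciprocity}.
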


These identities are involutive in the sense that we can swap the $\infty$ and $1$ subscripts to get equivalent statements. The new feature appearing in Theorem \ref{thm type reciprocity} is the relationship between squarefree polynomials and general polynomials of a given factorization type. This connection is closely related to Stanley's \emph{combinatorial reciprocity phenomenon} \cite{Stanley 1} (see Section \ref{sec related} below.)

\subsection{Liminal first moments of squarefree factorization statistics}

A function $P$ defined on $\poly_{d,n}(\FF_q)$ is called a \emph{factorization statistic} if $P(f)$ depends only on the factorization type of $f$. In \cite{Hyde} we found a surprising connection between the first moments of factorization statistics on the set of univariate polynomials ($n = 1$) and the cohomology of point configurations in Euclidean space viewed as a representation of the symmetric group. See Section \ref{sec liminal reps} for precise definitions. Note that $\poly_{d,n}^\sfr(\FF_q)$ denotes the subset of squarefree polynomials in $\poly_{d,n}(\FF_q)$.

\begin{thm}[{\cite[Thm. 2.4, Thm. 2.5]{Hyde}}]
\label{thm intro univariate twisted gl}
Let $P$ be a factorization statistic, and let $\psi_d^k$, $\phi_d^k$ be the characters of the $S_d$-representations $H^{2k}(\PConf_d(\RR^3),\QQ)$ and $H^k(\PConf_d(\RR^2),\QQ)$ respectively. Then
\begin{align*}
    (1) \sum_{f\in \poly_{d,1}(\FF_q)} P(f) &= \sum_{k=0}^{d-1} \langle P, \psi_d^k \rangle q^{d-k}\\
    (2) \sum_{f\in \poly_{d,1}^\sfr(\FF_q)} P(f) &= \sum_{k=0}^{d-1} (-1)^k\langle P, \phi_d^k \rangle q^{d-k},
\end{align*}
where $\langle P, Q \rangle = \frac{1}{d!}\sum_{\tau\in S_d}P(\tau)Q(\tau)$ is the standard inner product of class functions on $S_d$.
\end{thm}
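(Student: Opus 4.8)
The plan is to prove both identities by reducing an arbitrary factorization statistic to a single irreducible character, passing to configuration spaces, and matching the Frobenius action to the cohomological grading. Since the factorization type of $f$ records exactly the cycle type of the Frobenius permutation of its roots, a factorization statistic $P$ is the same data as a class function on $S_d$; expanding $P = \sum_V \langle P, \chi^V\rangle\chi^V$ over irreducibles and using linearity, it suffices to treat $P = \chi^V$. First I would record the two first moments as Hall inner products of symmetric functions. Writing $\ch(P) = \sum_{\lambda\vdash d} z_\lambda^{-1}P(\lambda)p_\lambda$ for the Frobenius characteristic and combining the type-polynomial formulas with the generating series $\sum_{m\geq 0}\multi{x}{m}t^m = (1-t)^{-x}$ and $\sum_{m\geq 0}\binom{x}{m}t^m = (1+t)^{x}$, the products over irreducible factors collapse to
\[
    \sum_{f\in\poly_{d,1}(\FF_q)}P(f) = \Big\langle \ch(P),\ \big[\textstyle\prod_{j\geq 1}(1-p_j)^{-M_{j,1}(q)}\big]_d\Big\rangle,
\]
\[
    \sum_{f\in\poly_{d,1}^\sfr(\FF_q)}P(f) = \Big\langle \ch(P),\ \big[\textstyle\prod_{j\geq 1}(1+p_j)^{M_{j,1}(q)}\big]_d\Big\rangle,
\]
where $[\,\cdot\,]_d$ is the part of total degree $d$ (with $\deg p_j = j$). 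Because $\ch$ is an isometry, the theorem becomes two purely symmetric-function identities.

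For the squarefree identity (2) I would give the geometric interpretation first. The squarefree monic polynomials of degree $d$ are the $\FF_q$-points of $\Conf_d(\AA^1) = \AA^d\setminus\{\text{discriminant}\}$, and the irreducible $S_d$-representation $V$ determines a local system $\mathcal V$ through the $S_d$-cover $\PConf_d(\AA^1)$. The twisted Grothendieck-Lefschetz formula of Church-Ellenberg-Farb evaluates $\sum_{f}\chi^V(\sigma_f)$ as the alternating trace of $\frob$ on $H^\bullet_c(\Conf_d(\AA^1)_{\overline{\FF}_q},\mathcal V)$, and since the Frobenius permutation $\sigma_f$ of the roots of $f$ has cycle type equal to the factorization type of $f$, the left side is exactly the squarefree first moment. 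Computing the right side via $H^\bullet_c(\Conf_d,\mathcal V) = (H^\bullet_c(\PConf_d)\otimes V)^{S_d}$, the comparison of \'etale and singular cohomology, $\PConf_d(\CC)\simeq \PConf_d(\RR^2)$, and the purity of the braid-arrangement complement (so $\frob$ acts on the degree-$k$ piece by a single power of $q$) produces $\sum_{k}(-1)^k\langle P,\phi_d^k\rangle q^{d-k}$: the sign is the Lefschetz sign and the exponent $d-k$ comes from Poincar\'e duality. Equivalently, one can bypass geometry and identify $\big[\textstyle\prod_j(1+p_j)^{M_{j,1}(q)}\big]_d$ with $\sum_k(-1)^k q^{d-k}\ch(\phi_d^k)$ directly from the Lehrer-Solomon description of $H^\bullet(\PConf_d(\CC))$ as a sum over the partition lattice.

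For the all-polynomials identity (1) the same machinery must be replayed with $\RR^2$ replaced by $\RR^3$. The key structural fact is that $H^\bullet(\PConf_d(\RR^3))$ is concentrated in even degrees, so the alternating Lefschetz signs all become $+1$; this is precisely why (1) carries no $(-1)^k$ while (2) does. Concretely, I would prove the symmetric-function identity
\[
    \big[\textstyle\prod_{j\geq 1}(1-p_j)^{-M_{j,1}(q)}\big]_d = \sum_{k=0}^{d-1} q^{d-k}\,\ch(\psi_d^k),
\]
with $\psi_d^k$ the character of $H^{2k}(\PConf_d(\RR^3))$, by computing the left side through $\log\prod_j(1-p_j)^{-M_{j,1}(q)} = \sum_j M_{j,1}(q)\sum_{m\geq 1}m^{-1}p_j^m$, substituting the necklace formula for $M_{j,1}(q)$, and reorganizing by powers of $q$, with each $q$-graded piece matching the equivariant Poincar\'e series of $\PConf_d(\RR^3)$.

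The main obstacle is this last identification: showing that the $q$-graded symmetric function extracted from the necklace polynomials is the graded Frobenius characteristic of $H^{2\bullet}(\PConf_d(\RR^3))$, with the power of $q$ reading off the cohomological degree. This is the genuinely new input --- the squarefree case is the established Church-Ellenberg-Farb calculation, whereas the all-polynomials case has no direct point-counting model and depends on the \emph{$\RR^3$ phenomenon} that the necklace-polynomial product reassembles exactly into the even cohomology of a three-dimensional configuration space. I would establish it either by comparing both sides to the Whitney homology of the partition lattice, where the parity of the rank function supplies the vanishing signs, or by a plethystic-logarithm computation matching the free-Lie-algebra structure underlying $M_{j,1}(q)$ to the known $S_d$-equivariant cohomology of $\PConf_d(\RR^3)$.
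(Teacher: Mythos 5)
First, a caveat: the paper does not prove this theorem. It is imported, with (2) attributed to Church--Ellenberg--Farb and (1) to the author's earlier paper \cite{Hyde}, and the only piece the present paper actually re-uses is the identity $T_{\lambda,1}(q)=\frac{1}{z_\lambda}\sum_k\psi_d^k(\lambda)q^{d-k}$ quoted from that source in the proof of Theorem \ref{thm rep interp}. So your outline can only be measured against the cited proofs, not against anything in this paper.

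Your reductions are correct and are the intended ones: expanding the first moment as $\sum_{\lambda\vdash d}P(\lambda)T_{\lambda,1}(q)$ (resp.\ $T^{\sfr}_{\lambda,1}(q)$), encoding the type polynomials by the cycle-index products $\prod_j(1-p_j)^{-M_{j,1}(q)}$ and $\prod_j(1+p_j)^{M_{j,1}(q)}$, and pairing with $\ch(P)$ under the Hall inner product does reduce both statements to symmetric-function identities, and your geometric account of (2) --- Grothendieck--Lefschetz for the local system on $\Conf_d(\AA^1)$, purity of the braid arrangement complement, Poincar\'e duality producing $(-1)^kq^{d-k}$ --- is exactly the Church--Ellenberg--Farb argument. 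The gap is the one you flag yourself: identity (1) rests entirely on
\[
\Big[\prod_{j\geq 1}(1-p_j)^{-M_{j,1}(q)}\Big]_d = \sum_{k=0}^{d-1}q^{d-k}\,\ch(\psi_d^k),
\]
equivalently $T_{\lambda,1}(q)=\frac{1}{z_\lambda}\sum_k\psi_d^k(\lambda)q^{d-k}$, and you name two possible strategies (Whitney homology of the partition lattice; a plethystic-logarithm computation) without executing either. This step cannot be waved through: unlike the squarefree case there is no variety whose twisted point count computes $\sum_{f\in\poly_{d,1}(\FF_q)}P(f)$ (the multiplicity strata of $\Sym^d\AA^1$ block a direct Lefschetz argument, which is why \cite{Hyde} proceeds by generating functions instead), so one must genuinely match the $q$-graded symmetric function extracted from the necklace polynomials to the $S_d$-equivariant Poincar\'e series of $\PConf_d(\RR^3)$, where the concentration in even degrees is what eliminates the signs. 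That computation is the actual content of the cited theorem; as written, your proposal establishes (2) modulo standard references but only reduces (1) to its hardest step.
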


The squarefree case (2) of Theorem \ref{thm intro univariate twisted gl} is due to Church, Ellenberg, and Farb \cite[Prop. 4.1]{CEF}. The general polynomial case (1) was shown by the author \cite{Hyde} using different methods which also led to a new proof of the squarefree case. Theorem \ref{thm intro univariate twisted gl} provides a bridge between the arithmetic and combinatorics of factorization statistics on one hand and the geometry and representation theory of configuration spaces on the other.

Computations suggest there are not direct analogs of Theorem \ref{thm intro univariate twisted gl} for $n > 1$. However, an analog does emerge in the liminal squarefree case.

\begin{thm}
\label{thm intro liminal twisted GL}
Let $P$ be a factorization statistic, and let $\sigma_d^k$ be the character of the $S_d$-representation
\begin{equation}
\label{eqn intro rep def}
    \Sigma_d^k = \bigoplus_{j=k}^{d-1} \sgn_d \otimes H^{2j}(\PConf_d(\RR^3),\QQ)^{\oplus \binom{j}{k}}.
\end{equation}
Then for each $n$ the first moment $\sum_{f\in \poly_{d,n}^\sfr(\FF_q)} P(f)$ is a polynomial in $q$ and
\[
    \lim_{n\rightarrow\infty} \sum_{f\in \poly_{d,n}^\sfr(\FF_q)} P(f) = \frac{1}{(1 - q)^d}\sum_{k=0}^{d-1} (-1)^k \langle P, \sigma_d^k \rangle q^{d-k},
\]
where the limit is taken coefficientwise in $\QQ\llbracket q \rrbracket$.
\end{thm}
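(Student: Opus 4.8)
The plan is to reduce the liminal squarefree first moment to the univariate \emph{general} first moment via the type--polynomial reciprocity of Theorem~\ref{thm type reciprocity}, and then to repackage the answer using the univariate formula of Theorem~\ref{thm intro univariate twisted gl}. First I would group squarefree polynomials by factorization type. Since $P$ is a factorization statistic, writing $P(\lambda)$ for its common value on type $\lambda$,
\[
\sum_{f\in\poly_{d,n}^\sfr(\FF_q)}P(f) = \sum_{\lambda\vdash d}P(\lambda)\,T^\sfr_{\lambda,n}(q).
\]
Polynomiality in $q$ for fixed $n$ is then immediate from Lemma~\ref{lem1} together with the product formula for $T^\sfr_{\lambda,n}(q)$, and since the sum over $\lambda\vdash d$ is finite the coefficientwise limit passes through it, yielding $\lim_{n\to\infty}\sum_{f}P(f) = \sum_{\lambda\vdash d}P(\lambda)\,T^\sfr_{\lambda,\infty}(q)$.

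Next I would apply liminal reciprocity. By Theorem~\ref{thm type reciprocity}, $T^\sfr_{\lambda,\infty}(q) = (-1)^{\ell(\lambda)}T_{\lambda,1}(r)$ with $r = \tfrac{1}{1-\frac1q} = \tfrac{q}{q-1}$. The useful observation here is that $(-1)^{\ell(\lambda)} = (-1)^d\sgn_d(\lambda)$, because a permutation of cycle type $\lambda$ has sign $(-1)^{d-\ell(\lambda)}$. Since $P\cdot\sgn_d$ is again a factorization statistic, this lets me absorb the sign and rewrite the limit as
\[
(-1)^d\sum_{\lambda\vdash d}(P\sgn_d)(\lambda)\,T_{\lambda,1}(r) = (-1)^d\sum_{f\in\poly_{d,1}(\FF_r)}(P\sgn_d)(f).
\]
Reading part~(1) of Theorem~\ref{thm intro univariate twisted gl} as a polynomial identity in the variable $r$ — so that it survives the rational substitution $r=\tfrac{q}{q-1}$ even though $r$ is not a prime power — gives $(-1)^d\sum_{k=0}^{d-1}\langle P\sgn_d,\psi_d^k\rangle\,r^{d-k}$, and the tensor--sign adjunction $\langle P\sgn_d,\psi_d^k\rangle = \langle P,\sgn_d\otimes\psi_d^k\rangle$ for class functions recasts each coefficient as a pairing against the character of $\sgn_d\otimes H^{2k}(\PConf_d(\RR^3),\QQ)$.

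The final and most delicate step is the bookkeeping that converts $\sum_k\langle P,\sgn_d\otimes\psi_d^k\rangle\, r^{d-k}$ into the stated closed form. Writing $r^{d-k} = q^{d-k}(q-1)^k/(q-1)^d$ and expanding $(q-1)^k$ by the binomial theorem, I would collect the contributions to each power $q^{d-m}$; interchanging the order of summation produces the coefficient $\sum_{k=m}^{d-1}\binom{k}{m}\langle P,\sgn_d\otimes\psi_d^k\rangle$. The crux is to recognize $\sum_{k=m}^{d-1}\binom{k}{m}\,(\sgn_d\otimes\psi_d^k)$ as exactly the character $\sigma_d^m$ of $\Sigma_d^m$ in \eqref{eqn intro rep def}: the multiplicities $\binom{j}{k}$ built into the definition of $\Sigma_d^k$ are precisely what the binomial expansion of $(q-1)^k$ requires in order to reassemble these shifted cohomology characters. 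Once this identification is made, using $(q-1)^d = (-1)^d(1-q)^d$ to absorb the leading sign yields
\[
\lim_{n\to\infty}\sum_{f\in\poly_{d,n}^\sfr(\FF_q)}P(f) = \frac{1}{(1-q)^d}\sum_{m=0}^{d-1}(-1)^m\langle P,\sigma_d^m\rangle\,q^{d-m},
\]
as claimed. I expect the representation-theoretic recognition in this last step, rather than the reciprocity input, to be where the real content lies.
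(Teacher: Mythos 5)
Your proposal is correct and follows essentially the same route as the paper: liminal reciprocity for $T^\sfr_{\lambda,\infty}$, the univariate identity $T_{\lambda,1}(q)=\frac{1}{z_\lambda}\sum_k\psi_d^k(\lambda)q^{d-k}$ read as a polynomial identity so the substitution $q\mapsto\frac{1}{1-1/q}$ is legitimate, and the binomial expansion of $(q-1)^j$ that reassembles the multiplicities $\binom{j}{k}$ into $\sigma_d^k$. The only (cosmetic) difference is that the paper first proves the per-partition identity for $T^\sfr_{\lambda,\infty}(q)$ (Theorem~\ref{thm rep interp}) and then pairs against $P$, whereas you sum against $P\sgn_d$ first and invoke Theorem~\ref{thm intro univariate twisted gl}(1) directly.
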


Since the limit in Theorem \ref{thm intro liminal twisted GL} is taken coefficientwise, the representation theoretic interpretation of first moments manifests for sufficiently large $n$. For example, let $L$ be the \emph{linear factor} statistic where $L(f)$ is the number of linear factors of $f$; the following table shows the first moment of $L$ on $\poly_{3,n}^\sfr(\FF_q)$ scaled by $(1 - q)^3$.

\begin{center}
\begin{tabular}{|c|l|}
\hline
$n$ & $(1-q)^3 \sum_{f\in \poly_{3,n}^\sfr(\FF_q)}L(f)$\\
\hline
$1$ & $q - 5q^2 + 10q^3 - 10q^4 + 5q^5 - q^6$ \\
\hline
$2$ & $q - 4q^2 + 2q^3 + 7q^4 - 6q^5 - 3q^6 + 2q^7 + q^8 + q^9 - q^{10}$ \\
\hline
$3$ & $q - 4q^2 + 3q^3 - q^4 + 7q^5 -6q^6 - 3q^8 + 3q^9 - q^{11} + q^{12} + q^{14} - q^{15}$ \\
\hline
$4$ & $q - 4q^2 + 3q^3 - q^5 + 7q^6 -6q^7 - 3q^{10} + 3q^{11} - q^{16} + q^{17} + q^{20} - q^{21}$ \\
\hline
$5$ & $q - 4q^2 + 3q^3 - q^6 + 7q^7 -6q^8 - 3q^{12} + 3q^{13} - q^{22} + q^{23} + q^{27} - q^{28}$ \\
\hline
\end{tabular}
\end{center}
From this table and the convergence bound in Theorem \ref{thm intro 1} we conclude that
\[
    \sum_{f\in \poly_{3,n}^\sfr(\FF_q)} L(f) = \frac{q - 4q^2 + 3q^3 + O(q^{n+1})}{(1 - q)^3}.
\]
It then follows from Theorem \ref{thm intro liminal twisted GL} that
\[
    \langle L, \sigma_3^2 \rangle = 1 \hspace{1em} \langle L, \sigma_3^1 \rangle = 4 \hspace{1em} \langle L, \sigma_3^0 \rangle = 3.
\]
Note that these inner products are positive integers: this reflects that $L$, viewed as a class function of the symmetric group, is the character of the standard permutation representation.

The table above also illustrates a higher stability in the coefficients. For example, the coefficient of $q^{n+2}$ is 7 in the numerator of the first moment of $L$ for all $n\geq 2$. Since these exponents grow with $n$, these terms vanish in the limit as $n\rightarrow \infty$. This phenomenon persists more generally and we hope to address it in a future project.

Liminal reciprocity gives a new method to compute the expected values of factorization statistics for univariate polynomials. As an example application we compute the expected value of the sign function $\sgn_d$, where $\sgn_d(\lambda) = (-1)^{d - \sum_{j\geq 1}m_j(\lambda)}$.

\begin{prop}
\label{sgn prop intro}
Let $d\geq 1$.
\begin{enumerate}
    \item The expected value $E_{d,1}(\sgn_d)$ of the sign statistic on the set $\poly_{d,1}(\FF_q)$ is given by
    \[
        E_{d,1}(\sgn_d) := \frac{1}{P_{d,1}(q)}\sum_{f\in \poly_{d,1}(\FF_q)} \sgn_d(f) =  \frac{1}{q^{\lfloor d/2 \rfloor}}.
    \]
    \item The limiting expected value $E_{d,\infty}^\sfr(\sgn_d)$ of the sign statistic on the set $\poly_{d,n}^\sfr(\FF_q)$ as $n\rightarrow\infty$ is given by
    \[
        E_{d,\infty}^\sfr(\sgn_d) := \lim_{n\rightarrow\infty} \frac{1}{P_{d,n}^\sfr(q)}\sum_{f\in \poly_{d,n}^\sfr(\FF_q)} \sgn_d(f) = \left(\frac{1}{1-\frac{1}{q}}\right)^{\lfloor d/2 \rfloor},
    \]
    where the limit is taken $1/q$-adically.
\end{enumerate}
\end{prop}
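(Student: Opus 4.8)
The plan is to treat the sign statistic as a completely multiplicative function on monic irreducibles, compute the relevant signed sums through a zeta-function style generating function to settle part (1), and then feed that one-variable answer into liminal reciprocity to obtain the liminal squarefree statement in part (2).

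For part (1) I would first record that $\sgn_d(\lambda) = (-1)^{d-\ell(\lambda)} = \prod_{j\ge 1}\bigl((-1)^{j-1}\bigr)^{m_j(\lambda)}$, so a monic irreducible of degree $j$ contributes the fixed sign $(-1)^{j-1}$ and $\sgn_d$ is multiplicative over factorization type. Combining $T_{\lambda,1}(q) = \prod_j \multi{M_{j,1}(q)}{m_j(\lambda)}$ with the negative binomial series $\sum_{m\ge 0}\multi{N}{m}x^m = (1-x)^{-N}$ gives
\[
    \sum_{d\ge 0} t^d \sum_{\lambda \vdash d} \sgn_d(\lambda) T_{\lambda,1}(q) = \prod_{j\ge 1}\bigl(1-(-1)^{j-1}t^j\bigr)^{-M_{j,1}(q)}.
\]
Starting from the standard identity $\prod_{j}(1-t^j)^{-M_{j,1}(q)} = (1-qt)^{-1}$ (the generating function $\sum_d q^d t^d$ for monic univariate polynomials), the substitution $t\mapsto -t$ gives $\prod_j(1-(-t)^j)^{-M_{j,1}(q)} = (1+qt)^{-1}$ and $t\mapsto t^2$ gives $\prod_j(1-t^{2j})^{-M_{j,1}(q)} = (1-qt^2)^{-1}$. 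Multiplying the signed generating function above by $(1+qt)^{-1}$ recombines its odd- and even-degree factors into $\prod_j(1-t^{2j})^{-M_{j,1}(q)}$, so it collapses to $\tfrac{1+qt}{1-qt^2}$. Extracting the coefficient of $t^d$ yields $\sum_{\lambda\vdash d}\sgn_d(\lambda)T_{\lambda,1}(q) = q^{\lceil d/2\rceil}$ (both parities), and dividing by $P_{d,1}(q) = q^d$ gives $E_{d,1}(\sgn_d) = q^{\lceil d/2\rceil - d} = q^{-\lfloor d/2\rfloor}$.

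For part (2), write $Q = \tfrac{1}{1-\frac1q}$. Since there are finitely many $\lambda\vdash d$ and each $T_{\lambda,n}^\sfr(q)$ converges coefficientwise by Theorem \ref{thm intro 1}, the numerator and denominator of $E_{d,n}^\sfr(\sgn_d)$ converge to $\sum_\lambda \sgn_d(\lambda)T_{\lambda,\infty}^\sfr(q)$ and $\sum_\lambda T_{\lambda,\infty}^\sfr(q)$. I would then invoke liminal reciprocity (Theorem \ref{thm type reciprocity}) as $T_{\lambda,\infty}^\sfr(q) = (-1)^{\ell(\lambda)}T_{\lambda,1}(Q)$. The decisive point is that the reciprocity sign cancels the sign statistic: $\sgn_d(\lambda)(-1)^{\ell(\lambda)} = (-1)^d$, so the limiting numerator becomes $(-1)^d\sum_\lambda T_{\lambda,1}(Q) = (-1)^d Q^d$, the total count of monic univariate polynomials of degree $d$ over $Q$ elements. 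Dually, $(-1)^{\ell(\lambda)} = (-1)^d\sgn_d(\lambda)$, so the limiting denominator is $(-1)^d\sum_\lambda \sgn_d(\lambda)T_{\lambda,1}(Q) = (-1)^d Q^{\lceil d/2\rceil}$ by the part (1) computation evaluated at $Q$. The factors $(-1)^d$ cancel in the quotient, giving $E_{d,\infty}^\sfr(\sgn_d) = Q^{\,d-\lceil d/2\rceil} = Q^{\lfloor d/2\rfloor} = \bigl(\tfrac{1}{1-\frac1q}\bigr)^{\lfloor d/2\rfloor}$.

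The main obstacle I anticipate is not the algebra but the analytic bookkeeping behind the limit in part (2): the numerator and denominator grow without bound in $n$, so only their ratio stabilizes, and one must justify passing the $1/q$-adic limit through the quotient. Note that Theorem \ref{thm intro 1} provides $q$-adic control of the low-order coefficients of $T_{\lambda,n}^\sfr$, whereas $1/q$-adic convergence of the expected value is governed by the top-degree behavior; these carry different information, so the interchange of limit and quotient in the stated topology is the step requiring genuine care, and I would expect to isolate it in a lemma establishing $1/q$-adic convergence of the normalized first moments (consistent with the value $Q^{\lfloor d/2\rfloor}$ determined above). A secondary point to check is that the floor/ceiling and $(-1)^d$ bookkeeping tracks correctly through both parts, which it does precisely because $d-\lceil d/2\rceil = \lfloor d/2\rfloor$.
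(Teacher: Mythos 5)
Your algebra checks out throughout, and your part (2) follows the paper's proof of Proposition \ref{sgn prop} essentially step for step: both arguments apply $T_{\lambda,\infty}^\sfr(q) = (-1)^{\ell(\lambda)}T_{\lambda,1}\big(\tfrac{1}{1-\frac{1}{q}}\big)$, cancel $(-1)^{\ell(\lambda)}$ against $\sgn_d(\lambda)$ up to $(-1)^d$, and identify the limiting numerator with $(-1)^dP_{d,1}(Q) = (-1)^dQ^d$ where $Q = \tfrac{1}{1-\frac{1}{q}}$. Part (1) is where you genuinely diverge: the paper runs reciprocity in the other direction, writing $\sum_{\lambda\vdash d}\sgn_d(\lambda)T_{\lambda,1}(q) = (-1)^dP_{d,\infty}^\sfr\big(\tfrac{1}{1-\frac{1}{q}}\big)$ and quoting the closed form $P_{d,\infty}^\sfr(q) = (-1)^dQ^{\lfloor (d+1)/2\rfloor}$ from Theorem \ref{thm converge}(2) --- the point of the example being to show reciprocity doing work --- whereas you compute the signed sum directly from the Euler product $\prod_j(1-t^j)^{-M_{j,1}(q)} = (1-qt)^{-1}$ and collapse the signed generating function to $\tfrac{1+qt}{1-qt^2}$. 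That computation is correct and is essentially the Carlitz/Liouville-function derivation the paper cites as an alternative route; it is more self-contained, at the cost of bypassing the reciprocity the example is meant to illustrate. Your derivation of the denominator in part (2) from part (1) evaluated at $Q$ is also fine and recovers exactly the paper's value of $P_{d,\infty}^\sfr$.

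The one step of your plan that would fail is the lemma you propose at the end. Your instinct that the topology is the delicate point is sound, but literal $1/q$-adic convergence of the sequence $E_{d,n}^\sfr(\sgn_d)$ to $Q^{\lfloor d/2\rfloor}$ is false, so no such lemma can be proved. For instance, for $d=2$ one finds $E_{2,n}^\sfr(\sgn_2) = \tfrac{M_{1,n}(q)^2 - P_{2,n}(q)}{P_{2,n}(q)-M_{1,n}(q)}$, and since $\deg P_{2,n} = \binom{n+2}{2}-1 > 2n = \deg M_{1,n}^2$ for $n\geq 2$, the constant term of its $1/q$-expansion equals $-1$ for every $n\geq 2$, while $Q$ has constant term $+1$. (Concretely: for $q\to\infty$ almost every squarefree quadratic is irreducible, so the expected sign tends to $-1$.) What the paper actually does, and what you should do, is take the limit termwise over the finitely many partitions: each $T_{\lambda,n}^\sfr(q)$ and $P_{d,n}^\sfr(q)$ converges $q$-adically to a rational function, the limiting denominator is nonzero, and division is continuous at nonzero elements of $\QQ(\!(q)\!)$, so the quotient converges $q$-adically to the rational function $Q^{\lfloor d/2\rfloor}$. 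The phrase ``$1/q$-adically'' in the statement is then best read as prescribing how that limiting rational function is expanded (consistent with part (1), whose answer is the $1/q$-monomial $q^{-\lfloor d/2\rfloor}$), not as the topology governing the convergence in $n$; with that reading the only interchange to justify is continuity of the field operations, which is exactly how the paper's proof proceeds.
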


This result is equivalent to a result of Carlitz arrived at by other means. See Proposition \ref{sgn prop} and the discussion that follows.

\subsection{Related work}
\label{sec related}

Carlitz \cite{Carlitz 1, Carlitz 2} studied the asymptotic behavior of $M_{d,n}(q)$ for $n\geq 1$. In the language of this paper his main result is as follows.
\begin{thm}[{\cite[Sec. 3.]{Carlitz 1}}]
\label{thm Carlitz}
For $d, n \geq 1$, let $m_{d,n} := \deg M_{d,n}(q)$. Then $m_{d,n} = \binom{d + n}{d} - 1$ and the sequence $M_{d,n}(q)/q^{m_{d,n}}$ of polynomials in $1/q$ converges coefficientwise in $\QQ\llbracket \frac{1}{q}\rrbracket$ to
\[
    \lim_{n\rightarrow\infty}\frac{M_{d,n}(q)}{q^{m_{d,n}}} = \frac{1}{1 - \frac{1}{q}}.
\]
\end{thm}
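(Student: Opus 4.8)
The plan is to compare $M_{d,n}(q)$ against the full count $P_{d,n}(q) := |\poly_{d,n}(\FF_q)|$ of all monic degree-$d$ polynomials and to show the two agree to very high order in $1/q$ once $n$ is large. First I would compute $P_{d,n}(q)$ by hand. Writing $N_d = \binom{n+d-1}{n-1}$ for the number of monomials of degree exactly $d$ and $N_{<d} = \binom{n+d-1}{n}$ for the number of monomials of lower degree, a monic polynomial of degree $d$ is a nonzero degree-$d$ homogeneous part together with an arbitrary lower-degree part, taken up to scaling by $\FF_q^\times$; since this action is free on the nonzero part,
\[
    P_{d,n}(q) = \frac{(q^{N_d}-1)\,q^{N_{<d}}}{q-1}.
\]
As $N_d + N_{<d} = \binom{n+d}{n}$, this gives $\deg P_{d,n} = \binom{n+d}{n}-1 = m_{d,n}$, and dividing yields $P_{d,n}(q)/q^{m_{d,n}} = (1 - q^{-N_d})/(1 - \tfrac1q)$, which already converges $1/q$-adically to $\tfrac{1}{1-1/q}$ because $N_d\to\infty$.

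Next I would isolate the irreducible contribution using unique factorization. Summing type polynomials over all factorization types gives $P_{d,n}(q) = \sum_{\lambda\vdash d}T_{\lambda,n}(q)$, in which the one-part partition contributes $T_{(d),n} = \multi{M_{d,n}}{1} = M_{d,n}$. Hence $M_{d,n}(q) = P_{d,n}(q) - R_{d,n}(q)$, where $R_{d,n} := \sum_{\lambda\neq(d)}T_{\lambda,n}$ counts the reducible monic polynomials, and the task reduces to bounding $\deg R_{d,n}$. Since $\multi{x}{m}$ has degree $m$ in $x$, one has $\deg T_{\lambda,n} = \sum_{j}m_j(\lambda)\big(\binom{n+j}{n}-1\big)$; because $j\mapsto\binom{n+j}{n}-1$ is convex and vanishes at $j=0$, this sum increases when parts are merged (majorization), so among $\lambda\neq(d)$ it is largest at $\lambda=(d-1,1)$, giving $\deg R_{d,n} = \binom{n+d-1}{n}+n-1$ for $n$ large.

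The decisive estimate is the size of the gap
\[
    m_{d,n} - \deg R_{d,n} = \binom{n+d}{n} - \binom{n+d-1}{n} - n = \binom{n+d-1}{n-1} - n,
\]
which for fixed $d\geq 2$ is a polynomial in $n$ of degree $d$ and so tends to $+\infty$. Consequently $R_{d,n}(q)/q^{m_{d,n}}\to 0$ in the $1/q$-adic topology, and combining with the first paragraph gives
\[
    \frac{M_{d,n}(q)}{q^{m_{d,n}}} = \frac{P_{d,n}(q)}{q^{m_{d,n}}} - \frac{R_{d,n}(q)}{q^{m_{d,n}}}\longrightarrow \frac{1}{1-\tfrac1q} - 0,
\]
as claimed. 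The same gap, once strictly positive, also pins down the degree: for $n\geq 2$ we have $\binom{n+d-1}{n-1} > n$, so the leading term of $M_{d,n}$ is inherited from $P_{d,n}$ with coefficient $1$ and $\deg M_{d,n}=m_{d,n}$; the case $d=1$ has no reducibles, and $n=1$ is the explicit necklace polynomial \eqref{necklace 1}.

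I expect the main obstacle to be the degree bookkeeping for $R_{d,n}$: one must confirm that the maximizing type is genuinely $(d-1,1)$ and control the gap, and then separately verify the degree identity $m_{d,n}=\binom{d+n}{d}-1$ in the small-$n$ regime, notably $n=1$, where the reducible locus has the same $q$-degree as $P_{d,n}$ and only the leading coefficient (passing from $1$ to $\tfrac1d$), not the degree, is affected.
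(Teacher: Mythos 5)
Your argument is correct, but note that the paper offers no proof of this statement to compare against: Theorem \ref{thm Carlitz} is quoted from Carlitz with a citation only, and the author's own theorems concern the opposite ($q$-adic rather than $1/q$-adic) asymptotics. Your route --- write $M_{d,n}=P_{d,n}-R_{d,n}$ with $R_{d,n}$ the reducible count, compute $P_{d,n}(q)/q^{m_{d,n}}=(1-q^{-\binom{n+d-1}{n-1}})/(1-\tfrac{1}{q})$ exactly, and kill $R_{d,n}$ by showing the degree gap $\binom{n+d-1}{n-1}-n\to\infty$ --- is essentially Carlitz's original elementary counting argument, and it reuses the same decomposition $P_{d,n}=\sum_{\lambda\vdash d}T_{\lambda,n}$ on which the paper's Lemma \ref{lem1} is built. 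By contrast, when the paper proves its own liminal analogue it passes through the Euler product $\sum_{d\geq 0}P_{d,n}(q)t^d=\prod_{j\geq 1}(1-t^j)^{-M_{j,n}(q)}$ rather than bounding degrees of type polynomials; your degree bookkeeping buys the extra statement $m_{d,n}=\binom{d+n}{d}-1$, which the generating-function route does not directly see. Two points to tighten: (i) your formula $\deg T_{\lambda,n}=\sum_j m_j(\lambda)\big(\binom{n+j}{n}-1\big)$ already invokes the degree formula for $M_{j,n}$ with $j<d$, so the whole argument is an induction on $d$ and should be phrased as one, with $d=1$ (no reducibles) as the base case; (ii) the Schur-convexity step only needs the upper bound $\deg R_{d,n}\leq g(d-1)+g(1)$ where $g(j)=\binom{n+j}{n}-1$, which holds for every $n\geq 1$ since degrees of sums of polynomials with nonnegative leading coefficients do not exceed the maximum, so the hedge ``for $n$ large'' is unnecessary --- the only place the gap genuinely fails to be positive is $n=1$, which you correctly dispatch by citing the explicit necklace polynomial.
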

This work was subsequently refined and extended in \cite{Bodin, Cohen, Hou Mullen, Ziegler, Wan}. Our Theorem \ref{thm intro 1} may be interpreted as a determination of the $q$-adic asymptotics of $M_{d,n}(q)$ as $n\rightarrow\infty$. In other words Carlitz studied the limiting behavior of the leading terms of $M_{d,n}(q)$ and we study the limiting behavior of the low degree terms.

The liminal reciprocity identities (Theorem \ref{thm intro 1} and Theorem \ref{thm type reciprocity}) were discovered empirically. We do not know the proper context for these results. The proof of the liminal reciprocity for type polynomials (Theorem \ref{thm type reciprocity}) passes through a well-known example of Stanley's \emph{combinatorial reciprocity phenomenon} \cite[Ex. 1.1]{Stanley 1}. Combinatorial reciprocity is a family of dualities between related combinatorial problems which concretely manifests as functional equations similar in form to our liminal reciprocity identities. However, the precise relationship between liminal and combinatorial reciprocity remains unclear. Are there other examples of liminal reciprocity?

The relationship between the liminal first moments of squarefree factorization statistics and representations of the symmetric group parallels our results in \cite{Hyde}. Church, Ellenberg, and Farb \cite{CEF} established the connection between first moments of squarefree factorization statistics for univariate polynomials and the cohomology of point configurations in $\RR^2$ with their \emph{twisted Grothendieck-Lefschetz formula} for squarefree polynomials. They deduce the asymptotic stability of first moments (as $d \rightarrow \infty$) as a consequence of \emph{representation stability}. We extend this connection to general univariate polynomials in \cite[Thm. 2.7]{Hyde}. However, this connection does not extend to liminal first moments; the representations $\Sigma_d^k$ does not exhibit representation stability.

The results in \cite{Hyde} are expressed in terms of expected values of factorization statistics. In this paper we focus on first moments as they lead to a cleaner statement for Theorem \ref{thm intro liminal twisted GL}. The only difference between expected values and first moments of factorization statistics is whether or not one divides by the ``total mass'' of the space of polynomials considered. This difference is simply a factor of $q^d$ for general univariate polynomials, but is more subtle for squarefree polynomials and multivariate polynomials as it affects the family of characters determining the coefficients. The equivalence between Theorem \ref{thm intro univariate twisted gl} (2) and \cite[Thm. 2.5]{Hyde} follows from \cite[Prop. 4.2]{Hyde Lagarias}. Alternatively, Theorem \ref{thm intro univariate twisted gl} (2) appears as stated in \cite[Prop. 4.1]{CEF}.

Finally we note that by virtue of treating arbitrary factorization statistics $P$ our results also provide information on higher moments of $P$ (since the $k$th moment of $P$ is the first moment of $P^k$.) 

\subsection{Acknowledgements}
The author thanks Weiyan Chen, Nir Gadish, Ofir Gorodetsky, Jeff Lagarias, Bob Lutz, John Stembridge, Phil Tosteson, Michael Zieve, and the anonymous referee for helpful conversations and suggestions on the manuscript.


\section{Polynomial factorization statistics}
\label{section factorization}

Let $\FF_q$ be a finite field. Fix some monomial ordering on $\FF_q[x_1, x_2, \ldots, x_n]$. A polynomial $f \in \FF_q[x_1, x_2, \ldots, x_n]$ is \emph{monic} with respect to this monomial ordering if the leading coefficient of $f$ is 1. Let $\poly_{d,n}(\FF_q)$ be the set of all degree $d$ polynomials in $\FF_q[x_1, x_2, \ldots, x_n]$ which are monic with respect to the monomial ordering. Note that the size of $\poly_{d,n}(\FF_q)$ is independent of the choice of monomial ordering. For each $m\geq 1$ let $\poly_{d,n}^m(\FF_q)\subseteq \poly_{d,n}(\FF_q)$ be the subset of those polynomials with all factors of multiplicity at most $m$. There is a filtration
\[
    \poly_{d,n}^\sfr(\FF_q) := \poly_{d,n}^1(\FF_q) \subseteq \poly_{d,n}^2(\FF_q) \subseteq \poly_{d,n}^3(\FF_q) \subseteq \ldots \subseteq \poly_{d,n}(\FF_q),
\]
where $\poly_{d,n}^\sfr(\FF_q)$ is the set of the squarefree polynomials.

Recall that $\FF_q[x_1, x_2, \ldots, x_n]$ is a unique factorization domain, hence every element of $\poly_{d,n}(\FF_q)$ has a unique factorization as a product of irreducible monic polynomials. The \emph{factorization type} of $f \in \poly_{d,n}(\FF_q)$ is the partition of $d$ given by the degrees of the irreducible factors of $f$. If $\lambda$ is a partition of $d$, then $\poly_{\lambda,n}(\FF_q)$ denotes the set of all $f \in \poly_{d,n}(\FF_q)$ with factorization type $\lambda$. For $m \geq 1$, let $\poly_{\lambda,n}^m(\FF_q) := \poly_{d,n}^m(\FF_q) \cap \poly_{\lambda,n}(\FF_q)$. If $\lambda = (d)$ is the partition with one part, let $\irr_{d,n}(\FF_q) := \poly_{(d),n}(\FF_q)$ be the set of monic, irreducible, total degree $d$ polynomials.

Lemma \ref{lem1} shows that the cardinality of each of the sets just defined is given by a polynomial in the size of the field $q$.

\begin{lemma}
\label{lem1}
For any $d,n\geq 1$,
\begin{enumerate}
    \item $|\poly_{d,n}(\FF_q)| = P_{d,n}(q)$, where
    \[
        P_{d,n}(q) = \frac{q^{\binom{d + n}{n}} - q^{\binom{d + n - 1}{n}}}{q-1} = q^{\binom{d + n - 1}{n}} \frac{q^{\binom{d + n - 1}{n- 1}} - 1}{q - 1}.
    \]
    \item $M_{d,n}(q)$ is a polynomial of $q$ with rational coefficients.

    \item For every partition $\lambda \vdash d$,
    \begin{align*}
        |\poly_{\lambda,n}(\FF_q)| &= T_{\lambda,n}(q) := \prod_{j\geq 1}\multi{M_{j,n}(q)}{m_j(\lambda)},\\
        |\poly_{\lambda,n}^\sfr(\FF_q)| &= T_{\lambda,n}^\sfr(q) := \prod_{j\geq 1} \binom{M_{j,n}(q)}{m_j(\lambda)}.
    \end{align*}
    where $\multi{x}{m} := \binom{x + m - 1}{m}$ is the number of subsets with repetition of size $m$ chosen from an $x$ element set.
\end{enumerate}
\end{lemma}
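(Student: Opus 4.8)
The plan is to prove the three parts in sequence, with parts (1) and (3) following from direct counting and part (2)---the polynomiality of $M_{d,n}(q)$---resting on the Euler-product relationship between the $P_{j,n}(q)$ and the $M_{j,n}(q)$.

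For part (1), I would count the polynomials of total degree exactly $d$ by a dimension count over $\FF_q$. The monomials in $n$ variables of total degree at most $d$ number $\binom{d+n}{n}$, so there are $q^{\binom{d+n}{n}}$ polynomials of degree at most $d$ and $q^{\binom{d+n-1}{n}}$ of degree at most $d-1$; their difference counts the polynomials of degree exactly $d$. For any fixed monomial ordering the leading term of a product is the product of the leading terms, so scaling by $\FF_q^\times$ acts freely on the nonzero degree-$d$ polynomials, with each orbit containing a unique monic representative. Dividing by $q-1$ yields the first expression for $P_{d,n}(q)$, and Pascal's rule $\binom{d+n}{n} - \binom{d+n-1}{n} = \binom{d+n-1}{n-1}$ produces the factored form.

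For part (2), the key point is that total degree is additive under multiplication and that products of monic polynomials are monic, so unique factorization identifies $\poly_{d,n}(\FF_q)$ with the set of multisets of monic irreducibles of total degree $d$. This gives the formal identity
\[
    \sum_{d\geq 0} P_{d,n}(q)\, t^d = \prod_{j\geq 1}\left(\frac{1}{1-t^j}\right)^{M_{j,n}(q)}.
\]
Since the left-hand side has constant term $P_{0,n}(q)=1$, its logarithm is a well-defined formal power series in $t$ whose $t^d$-coefficient $\ell_{d,n}(q)$ is a rational polynomial combination of the $P_{j,n}(q)$, hence a polynomial in $q$ with rational coefficients by part (1). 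Expanding the logarithm of the product and comparing coefficients gives $d\,\ell_{d,n}(q) = \sum_{j\mid d} j\, M_{j,n}(q)$, and M\"{o}bius inversion then yields $M_{d,n}(q) = \frac{1}{d}\sum_{j\mid d}\mu(d/j)\, j\, \ell_{j,n}(q)$, exhibiting $M_{d,n}(q)$ as a recursively computable polynomial in $q$ with rational coefficients. As a sanity check, the case $n=1$ gives $P_{d,1}(q)=q^d$ and recovers the necklace polynomial \eqref{necklace 1}.

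For part (3), I would exploit that the factorization type records the degrees of the irreducible factors with multiplicity. A polynomial of type $\lambda$ is determined, independently for each $j$, by a choice of $m_j(\lambda)$ irreducibles of degree $j$ from among the $M_{j,n}(q)$ available: in the general case repetitions are allowed, contributing $\multi{M_{j,n}(q)}{m_j(\lambda)}$ choices, and in the squarefree case the chosen factors must be distinct, contributing $\binom{M_{j,n}(q)}{m_j(\lambda)}$ choices. Taking the product over $j$ gives the two type-polynomial formulas. The only real subtlety across the argument lies in part (2)---confirming that the additivity of total degree and the multiplicativity of leading terms make the Euler product valid and the logarithmic manipulations formally legitimate---after which everything reduces to standard M\"{o}bius inversion and elementary enumeration.
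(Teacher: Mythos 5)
Your proofs of parts (1) and (3) are correct and essentially identical to the paper's: the same dimension count followed by quotienting by the free scaling action of $\FF_q^\times$ for (1), and the same multiset-versus-subset count of irreducible factors for (3). For part (2) you take a genuinely different route. The paper argues by induction on $d$: unique factorization gives the partition-indexed decomposition $P_{d,n}(q) = |\irr_{d,n}(\FF_q)| + \sum_{\lambda\vdash d,\,\lambda\neq(d)}\prod_{j\geq 1}\multi{M_{j,n}(q)}{m_j(\lambda)}$, and since every $\lambda\neq(d)$ has all parts $j<d$, the inductive hypothesis lets one solve for $M_{d,n}(q)$ as a polynomial. You instead take the logarithm of the Euler product $\sum_{d\geq 0} P_{d,n}(q)t^d=\prod_{j\geq 1}(1-t^j)^{-M_{j,n}(q)}$ and M\"{o}bius-invert the divisor-sum relation $d\,\ell_{d,n}(q)=\sum_{j\mid d}jM_{j,n}(q)$. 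Both arguments are valid; the formal points you flag (additivity of total degree, monic times monic is monic, logarithm of a $t$-adically convergent product with constant term $1$) are all legitimate, and your $n=1$ sanity check correctly recovers \eqref{necklace 1}. Your route buys a more explicit answer: a divisor sum that generalizes the necklace formula and is computationally lighter than a sum over partitions. The paper's route buys exactly the partition recursion $M_{d,n}(q)=P_{d,n}(q)-\sum_{\lambda\neq(d)}T_{\lambda,n}(q)$ that is reused verbatim later (in the proof of Theorem \ref{thm converge}(3), where one takes coefficientwise limits as $n\to\infty$ term by term in that recursion), which is presumably why the author sets it up inductively rather than via the zeta function.
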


\begin{proof}
(1) There are $q^{\binom{d + n}{n}}$ polynomials in $n$ variables of degree at most $d$. Hence there are $q^{\binom{d+n}{n}} - q^{\binom{d+n-1}{n}}$ polynomials in $n$ variables of degree exactly $d$. After choosing a monomial order, every degree $d$ polynomial has a nonzero leading coefficient. Therefore the total number of degree $d$ monic polynomials in $n$ variables is
\[
    |\poly_{d,n}(\FF_q)| = \frac{q^{\binom{d+n}{n}} - q^{\binom{d+n-1}{n}}}{q-1}.
\]

\noindent (2) We proceed by induction on $d$ to show that
\[
    |\irr_{d,n}(\FF_q)| = M_{d,n}(q)
\]
for some polynomial $M_{d,n}(x) \in \QQ[x]$. If $d = 1$, then all polynomials are irreducible, hence
\[  
    |\irr_{1,n}(\FF_q)| = |\poly_{1,n}(\FF_q)| = q\frac{q^n - 1}{q-1},
\]
So $M_{1,n}(q) = q\frac{q^n - 1}{q-1}$. Suppose our claim were true for all degrees less than $d>1$. By unique factorization, the total number of polynomials with factorization type $\lambda$ is
\begin{equation}
\label{eq1}
    |\poly_{\lambda,n}(\FF_q)| = \prod_{j\geq 1}\multi{|\irr_{j,n}(\FF_q)|}{m_j(\lambda)}.
\end{equation}
Counting elements on both sides of the decomposition
\[
    \poly_{d,n}(\FF_q) = \bigsqcup_{\lambda \vdash d} \poly_{\lambda,n}(\FF_q),
\]
gives
\[
    P_{d,n}(q) = |\irr_{d,n}(\FF_q)| + \sum_{\substack{\lambda \vdash d\\ \lambda \neq (d)}} |\poly_{\lambda,n}(\FF_q)|.
\]
If $\lambda \neq (d)$, then all parts $j$ of $\lambda$ are smaller than $d$, which by our inductive hypothesis implies that $|\irr_{j,n}(\FF_q)| = M_{j,n}(q)$ for all such $j$. Thus
\[
   |\irr_{d,n}(\FF_q)| = M_{d,n}(q) := P_{d,n}(q) - \sum_{\substack{\lambda \vdash d\\\lambda \neq (d)}}\prod_{j\geq 1} \multi{M_{j,n}(q)}{m_j(\lambda)}.
\]
Finally, (3) follows from \eqref{eq1} and (2).
\end{proof}
The definitions of the polynomials appearing in Lemma \ref{lem1} are collected here for the reader's convenience.

\begin{defi}
\label{def fact}
Let $d,n\geq 1$ and $\lambda \vdash d$, then
\begin{align*}
    P_{d,n}(q) &=  \frac{q^{\binom{d + n}{n}} - q^{\binom{d + n - 1}{n}}}{q-1} = q^{\binom{d + n - 1}{n}} \frac{q^{\binom{d + n - 1}{n- 1}} - 1}{q - 1}\\
    M_{d,n}(q) &= |\irr_{d,n}(\FF_q)| = |\poly_{(d),n}(\FF_q)|\\
    T_{\lambda,n}(q) &= |\poly_{\lambda,n}(\FF_q)| = \prod_{j\geq 1} \multi{M_{j,n}(q)}{m_j(\lambda)}\\
    T_{\lambda,n}^m(q) &= |\poly_{\lambda,n}^m(\FF_q)|\\
    T_{\lambda,n}^\sfr(q) &= T_{\lambda,n}^1(q) = |\poly_{\lambda,n}^\sfr(\FF_q)| = \prod_{j\geq 1} \binom{M_{j,n}(q)}{m_j(\lambda)}\\
    P_{d,n}^m(q) &= |\poly_{d,n}^m(\FF_q)| = \sum_{\lambda \vdash d} T_{\lambda,n}^m(q),
\end{align*}
where $d$ represents \textbf{d}egree, $n$ the \textbf{n}umber of variables, and $m$ the maximum \textbf{m}ultiplicity of a factor.
\end{defi}

\noindent There is a well-known formula \cite[Cor. 2.1]{Rosen} for $M_{d,1}(q)$ given by counting elements in $\FF_{q^d}$ by the field they generate,
\begin{equation}
\label{eq2}
    M_{d,1}(q) = \frac{1}{d}\sum_{e\mid d} \mu(e) q^{d/e}.
\end{equation}
The value of $M_{d,1}(k)$ for an integer $k \geq 1$ has a combinatorial interpretation as the number of aperiodic necklaces made with beads of $k$ colors. For this reason, $M_{d,1}(q)$ is known as the $d$th \emph{necklace polynomial}. There is no apparent analog of \eqref{eq2} nor a combinatorial interpretation for $M_{d,n}(k)$ when $n > 1$. Instead $M_{d,n}(q)$ may be computed inductively as in the proof of Lemma \ref{lem1}:
\begin{align*}
    M_{1,n}(q) &= P_{1,n}(q) = q\frac{q^n - 1}{q-1}\\
    M_{d,n}(q) &= P_{d,n}(q) - \sum_{\substack{\lambda \vdash d \\ \lambda \neq [d]}} T_{\lambda,n}(q).
\end{align*}

Our next result shows that all the polynomials listed in Definition \ref{def fact} converge coefficientwise to rational functions in the ring of formal power series $\QQ\llbracket q \rrbracket$ as the number of variables $n$ tends to infinity. Recall that coefficientwise convergence in $\QQ\llbracket q \rrbracket$ is equivalent to convergence with respect to the $q$-adic topology. All coefficientwise limits are taken with respect to the $q$-adic topology.

\begin{thm}
\label{thm converge}
Let $d\geq 1$. Then,
\begin{enumerate}
    \item The sequence $P_{d,n}(q)$ converges coefficientwise in $\QQ\llbracket q \rrbracket$ to
    \[
        P_{d,\infty}(q) = \lim_{n\rightarrow \infty} P_{d,n}(q) = \begin{cases} -\frac{1}{1-\frac{1}{q}} & d = 1\\ 0 & d > 1. \end{cases}
    \]
    \item For $m\geq 1$ the sequence $P_{d,n}^m(q)$ converges coefficientwise in $\QQ\llbracket q \rrbracket$ to
    \[ 
        P_{d,\infty}^m(q) = \lim_{n\rightarrow\infty}P_{d,n}^m(q) = \begin{cases}-\Big(\frac{1}{1-\frac{1}{q}}\Big)^k & d = (m+1)k - m\\ \Big(\frac{1}{1-\frac{1}{q}}\Big)^k & d = (m+1)k\\ 0 & d \not\equiv 0,1 \bmod m+1.\end{cases}
    \]

    \noindent In particular, if $m = 1$, then
    \[
        P_{d,\infty}^\sfr(q) = (-1)^d \Big(\tfrac{1}{1-\frac{1}{q}}\Big)^{\lfloor \frac{d+1}{2}\rfloor}.
    \]
    \item For all partitions $\lambda \vdash d$ and $m\geq 1$ the sequences $M_{d,n}(q)$, $T_{\lambda,n}(q)$, and $T_{\lambda,n}^m(q)$ converge coefficientwise in $\QQ\llbracket q \rrbracket$ to rational functions as $n\rightarrow\infty$. Furthermore,
        \begin{align*}
            T_{\lambda,\infty}(q) &= \prod_{j\geq 1} \multi{M_{j,\infty}(q)}{m_j(\lambda)}\\
            T_{\lambda,\infty}^\sfr(q) &= \prod_{j\geq 1}\binom{M_{j,\infty}(q)}{m_j(\lambda)}.
        \end{align*}
\end{enumerate}
\end{thm}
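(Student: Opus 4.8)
The plan is to route all three parts through a single Euler-product generating function, using the explicit formula for $P_{d,n}(q)$ from Lemma~\ref{lem1} as the only external input. Part (1) comes out directly: writing $P_{d,n}(q) = q^{\binom{d+n-1}{n}}\tfrac{q^{\binom{d+n-1}{n-1}}-1}{q-1}$, the $q$-adic valuation of $P_{d,n}(q)$ equals $\binom{d+n-1}{n}=\binom{d+n-1}{d-1}$, a polynomial in $n$ of degree $d-1$. For $d>1$ this tends to $\infty$, so $P_{d,n}(q)\to 0$ coefficientwise, while for $d=1$ one has $P_{1,n}(q)=q+q^2+\cdots+q^n$, which converges to $\tfrac{q}{1-q}=-\tfrac{1}{1-\frac{1}{q}}$ (indeed $P_{1,n}\equiv P_{1,\infty}\bmod q^{n+1}$). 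I would then dispatch the convergence claims of Part (3) by induction on $d$ using the recursion $M_{d,n}(q)=P_{d,n}(q)-\sum_{\lambda\vdash d,\ \lambda\neq(d)}T_{\lambda,n}(q)$ from the proof of Lemma~\ref{lem1}: each $\lambda\neq(d)$ has all parts $<d$, so $T_{\lambda,n}=\prod_j\multi{M_{j,n}(q)}{m_j(\lambda)}$ is a fixed polynomial in the convergent sequences $M_{1,n},\dots,M_{d-1,n}$, hence converges by $q$-adic continuity of finite sums and products, and with Part~(1) so does $M_{d,n}$, to some rational function $M_{d,\infty}(q)$. The same continuity argument yields the stated product formulas for $T_{\lambda,\infty}(q)$ and $T^{\sfr}_{\lambda,\infty}(q)$ and the convergence of each $T^m_{\lambda,n}$.

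The heart of the matter is Part (2). Unique factorization in $\FF_q[x_1,\dots,x_n]$, grouping a multiplicity-$\le m$ polynomial by its irreducible factors, gives the identity of formal power series in $t$
\[
\sum_{d\ge 0} P^m_{d,n}(q)\,t^d \;=\; \prod_{j\ge 1}\left(\frac{1-t^{(m+1)j}}{1-t^j}\right)^{M_{j,n}(q)},
\]
whose $t^d$-coefficient is a fixed polynomial in $M_{1,n}(q),\dots,M_{d,n}(q)$ (in particular $P^m_{d,n}(q)$ is a polynomial in $q$, settling that assertion). By the convergence and continuity established above I may pass to the limit coefficientwise, obtaining $\sum_d P^m_{d,\infty}(q)t^d=\prod_j(\cdots)^{M_{j,\infty}(q)}=:F_\infty(t)$. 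Running the same argument with no multiplicity bound and invoking Part~(1) produces the crucial collapse
\[
\prod_{j\ge 1}(1-t^j)^{-M_{j,\infty}(q)} \;=\; \sum_{d\ge 0} P_{d,\infty}(q)\,t^d \;=\; 1 - s\,t, \qquad s := \tfrac{1}{1-\frac{1}{q}},
\]
since $P_{0,\infty}=1$, $P_{1,\infty}=-s$, and $P_{d,\infty}=0$ for $d>1$ (note $s\in q\,\QQ\llbracket q\rrbracket$, so every expression below is a legitimate power series).

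Taking logarithms in the displayed collapse gives $\sum_{j}M_{j,\infty}(q)\sum_{i\ge 1}\tfrac{u^{ij}}{i}=\log(1-su)$ as an identity of power series; substituting $u=t$ and $u=t^{m+1}$ and subtracting yields $\log F_\infty(t)=\log(1-st)-\log(1-st^{m+1})$, hence
\[
F_\infty(t) \;=\; \frac{1-s\,t}{1-s\,t^{m+1}} \;=\; (1-s\,t)\sum_{k\ge 0} s^{k}\,t^{(m+1)k}.
\]
Reading off the coefficient of $t^d$ gives $s^{k}$ when $d=(m+1)k$, $-s^{k}$ when $d=(m+1)k-m=(m+1)(k-1)+1$, and $0$ otherwise, matching $P^m_{d,\infty}(q)$ exactly; the displayed $m=1$ reduction $(-1)^d\big(\tfrac{1}{1-\frac{1}{q}}\big)^{\lfloor(d+1)/2\rfloor}$ is the special case. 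As a bonus, extracting the $t^d$-coefficient of $\log(1-st)$ gives $\sum_{j\mid d}j\,M_{j,\infty}(q)=-s^{d}$, and Möbius inversion recovers the explicit necklace-type formula for $M_{d,\infty}(q)$ of Theorem~\ref{thm intro 1}.

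The one step demanding genuine care is the interchange of the $n\to\infty$ limit with extraction of the $t^d$-coefficient from the infinite product: this is legitimate precisely because that coefficient depends on only the finitely many $M_{j,n}$ with $j\le d$ through a fixed polynomial, so $q$-adic convergence of each $M_{j,n}$ plus continuity of finite arithmetic forces convergence of the coefficient. The conceptually striking feature—and the reason the limit is so clean—is that the limiting zeta product $\prod_j(1-t^j)^{-M_{j,\infty}(q)}$ degenerates to the \emph{linear} polynomial $1-st$; this degeneration is equivalent to the vanishing $P_{d,\infty}(q)=0$ for $d>1$ from Part~(1), so Part~(1) is in effect carrying the weight of the entire theorem.
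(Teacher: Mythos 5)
Your proposal is correct and follows essentially the same route as the paper: the explicit formula for $P_{d,n}(q)$ for part (1), the Euler-product identity $Z(T_n,t)=Z(T_n,t^{m+1})Z(T_n^m,t)$ collapsing to $1-st=(1-st^{m+1})\sum_d P^m_{d,\infty}(q)t^d$ for part (2), and induction on the recursion $M_{d,n}=P_{d,n}-\sum_{\lambda\neq(d)}T_{\lambda,n}$ for part (3). The only (cosmetic) differences are that you solve for $F_\infty(t)=\frac{1-st}{1-st^{m+1}}$ via logarithms and a geometric series where the paper extracts the recursion $P^m_{d+m+1,\infty}=sP^m_{d,\infty}$ and inducts, and that you justify the limit interchange through the convergence of the $M_{j,n}$ rather than directly from part (1).
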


\begin{proof}
(1) By Lemma \ref{lem1}
\[
    P_{d,n}(q) = q^{\binom{d+n-1}{n}}\frac{q^{\binom{d + n - 1}{n-1}} - 1}{q-1}.
\]
For $d = 1$ this simplifies to
\[
    P_{1,n}(q) = q\frac{q^n -1}{q - 1}.
\]
Since $\lim_{n\rightarrow\infty}q^n = 0$ in the $\QQ\llbracket q \rrbracket$, it follows that
\[
    P_{1,\infty}(q) = \lim_{n\rightarrow\infty} q \frac{q^n - 1}{q-1} =  -\frac{q}{q-1} = -\frac{1}{1 - \frac{1}{q}}.
\]
If $d>1$, then $\lim_{n\rightarrow\infty}\binom{d + n - 1}{n} = \infty$. Thus
\[
    P_{d,\infty}(q) = \lim_{n\rightarrow\infty} q^{\binom{d+n-1}{n}}\frac{q^{\binom{d + n - 1}{n-1}} - 1}{q-1} = 0.
\]
(2) Consider the generating functions
\begin{align*}
    Z(T_n^m,t) &= \sum_{d\geq 0}\sum_{\lambda\vdash d} T_{\lambda,n}^m(q) t^d = \sum_{d\geq 0}P_{d,n}^m(q) t^d\\
    Z(T_n, t) &= \sum_{d\geq 0}\sum_{\lambda\vdash d} T_{\lambda,n}(q) t^d = \sum_{d\geq 0} P_{d,n}(q) t^d.
\end{align*}

Recall that the binomial theorem allows us to exponentiate $1 + t$ or $\frac{1}{1 - t}$ by any element $m$ of a ring $R$ in $R\llbracket t \rrbracket$ by
\begin{align*}
    (1 + t)^m &:= \sum_{d\geq 0}\binom{m}{d}t^d\\
    \left(\frac{1}{1 - t}\right)^m &:= \sum_{d\geq 0}\multi{m}{d}t^d.
\end{align*}

The following product formulas follow by unique factorization in $\FF_q[x_1, x_2, \ldots, x_n]$,
\begin{align*}
    Z(T_n^m, t) &= \prod_{j\geq 1}(1 + t^j + t^{2j} + \ldots + t^{mj})^{M_{j,n}(q)} = \prod_{j\geq 1}\left(\frac{1 - t^{(m+1)j}}{1 - t^j}\right)^{M_{j,n}(q)}\\
    Z(T_n, t) &= \prod_{j\geq 1}\left(\frac{1}{1 - t^j}\right)^{M_{j,n}(q)}.
\end{align*}
Hence $Z(T_n,t) = Z(T_n,t^{m+1})Z(T_n^m,t)$. The coefficients of $t^d$ for $d\geq 0$ in this identity are polynomials in $\QQ\llbracket q \rrbracket$ which converge $q$-adically as $n\rightarrow\infty$. Taking a limit $t$-coefficientwise as $n\rightarrow\infty$, (1) implies that
\[
    1 - \tfrac{1}{1-\frac{1}{q}}t = Z(T_\infty,t) = Z(T_\infty,t^{m+1})Z(T_\infty^m,t) = \big(1 - \tfrac{1}{1-\frac{1}{q}}t^{m+1}\big)\sum_{d\geq 0} P_{d,\infty}^m(q) t^d.
\]
Comparing coefficients we conclude that
\[
    P_{d + m + 1,\infty}^m(q) = \frac{1}{1-\frac{1}{q}}P_{d,\infty}^m(q)
\]
for all $d\geq 0$, together with the initial values
\begin{align*}
    P_{0,\infty}^m(q) &= 1\\
    P_{1,\infty}^m(q) &= -\frac{1}{1-\frac{1}{q}}\\
    P_{d,\infty}^m(q) &= 0 \text{ for } 1 < d \leq m.
\end{align*}
Then (2) follows by induction.\\

\noindent (3) It suffices to prove that for every $d\geq 1$ the sequence $M_{d,n}(q)$ converges $q$-adically to a rational function, the other claims follow by the explicit formulas given in Definition \ref{def fact} and continuity. Recall the recursive formulas for $M_{d,n}(q)$ used in the proof of Lemma \ref{lem1}. For all $d,n\geq 1$,
\begin{align*}
    M_{1,n}(q) &= P_{1,n}(q)\\
    M_{d,n}(q) &= P_{d,n}(q) - \sum_{\substack{\lambda \vdash d \\ \lambda \neq [d]}} \prod_{j\geq 1} \multi{M_{j,n}(q)}{m_j(\lambda)}.
\end{align*}
Taking coefficientwise limits as $n\rightarrow \infty$ using (1) gives
\begin{align*}
    M_{1,\infty}(q) &= P_{1,\infty}(q) =  - \frac{1}{1-\frac{1}{q}},\\
    M_{d,\infty}(q) &=  - \sum_{\substack{\lambda \vdash d \\ \lambda \neq [d]}} \prod_{j\geq 1} \multi{M_{j,\infty}(q)}{m_j(\lambda)}.
\end{align*}
It follows by induction that $M_{d,\infty}(q)$ is a rational function of $q$ for all $d\geq 1$.
\end{proof}

There is a surprising relationship between the number of irreducible polynomials in one variable $M_{d,1}(q)$ and the limit $M_{d,\infty}(q)$ which gives us an explicit formula for $M_{d,\infty}(q)$. This relationship takes the form of an involutive functional equation we call \emph{liminal reciprocity}.

\begin{thm}[Liminal reciprocity]
\label{liminal necklace}
For all $d \geq 1$,
\[
    M_{d,\infty}(q) = - M_{d,1}\Big(\tfrac{1}{1-\frac{1}{q}}\Big).
\]
More explicitly,
\[
    M_{d,\infty}(q) = -\frac{1}{d}\sum_{e\mid d} \mu(d/e)\bigg(\frac{1}{1 - \frac{1}{q}}\bigg)^e.
\]
\end{thm}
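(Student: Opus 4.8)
The plan is to run the classical necklace-polynomial argument in parallel for the one-variable counts and for the $n\to\infty$ limit, and then compare the two. The essential inputs are two product formulas of zeta type. On the one hand, unique factorization in $\FF_q[x]$ together with $|\poly_{d,1}(\FF_q)| = q^d$ gives the classical identity
\[
    \prod_{j\geq 1}\Big(\frac{1}{1-t^j}\Big)^{M_{j,1}(q)} = \frac{1}{1-qt}.
\]
On the other hand, the product formula for $Z(T_n,t)$ recalled in the proof of Theorem \ref{thm converge}(2), combined with the coefficientwise limits of Theorem \ref{thm converge}(1) and (3), yields
\[
    \prod_{j\geq 1}\Big(\frac{1}{1-t^j}\Big)^{M_{j,\infty}(q)} = Z(T_\infty,t) = 1 - \frac{1}{1-\frac{1}{q}}\,t.
\]
I would record these as the starting point, leaning on Theorem \ref{thm converge} for the $q$-adic convergence of each $t^d$-coefficient.

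Next, writing $u = \frac{1}{1-\frac{1}{q}}$ and substituting $q\mapsto u$ in the one-variable identity produces $\prod_{j\geq 1}(1-t^j)^{-M_{j,1}(u)} = \frac{1}{1-ut}$, whose reciprocal is exactly $1 - ut = Z(T_\infty,t)$. Thus the two infinite products, for $M_{j,\infty}(q)$ and for $-M_{j,1}(u)$, agree:
\[
    \prod_{j\geq 1}\Big(\frac{1}{1-t^j}\Big)^{M_{j,\infty}(q)} = \prod_{j\geq 1}\Big(\frac{1}{1-t^j}\Big)^{-M_{j,1}(u)}.
\]
To extract $M_{d,\infty}(q) = -M_{d,1}(u)$ for every $d$, I would take $-\log$ of both sides in $\QQ(q)\llbracket t\rrbracket$ (legitimate since both sides have constant term $1$) and use $-\log(1-t^j) = \sum_{k\geq 1} t^{jk}/k$. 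The coefficient of $t^d$ in $\sum_{j\geq 1} c_j\big(-\log(1-t^j)\big)$ equals $\frac{1}{d}\sum_{j\mid d} j\,c_j$, an invertible (Möbius) transform of the sequence $(c_j)$; hence equality of the two logarithmic expansions forces the exponents to coincide term by term. Substituting the explicit necklace formula \eqref{eq2} then yields the closed form $M_{d,\infty}(q) = -\frac{1}{d}\sum_{e\mid d}\mu(d/e)u^{e}$.

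The step I expect to be most delicate is not the algebra but the bookkeeping of limits and formal operations: one must be sure that passing to the coefficientwise $n\to\infty$ limit commutes with the infinite product and with taking logarithms, i.e.\ that the identity for $Z(T_\infty,t)$ genuinely holds in the completed ring where each power of $t$ carries a well-defined $q$-adic coefficient. Most of this is already supplied by the proof of Theorem \ref{thm converge}(2), where $Z(T_\infty,t)$ is computed $t$-coefficientwise with each coefficient a $q$-adically convergent sequence, so I would cite that rather than redo it. If one prefers to avoid logarithms entirely, the same conclusion follows by applying $t\tfrac{d}{dt}\log$ to both products, which gives $\sum_{j\mid d} j\,M_{j,\infty}(q) = -u^{d}$, and then Möbius-inverting directly.
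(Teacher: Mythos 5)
Your proposal is correct and follows essentially the same route as the paper: both compare the limiting product formula $Z(T_\infty,t) = 1 - \tfrac{1}{1-\frac{1}{q}}t$ with the cyclotomic identity under the substitution $q\mapsto\tfrac{1}{1-\frac{1}{q}}$ and then match exponents. Your explicit justification of the exponent-matching step via logarithms and M\"obius inversion is a small elaboration of what the paper leaves implicit, not a different argument.
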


\begin{proof}
Recall the generating function $Z(T_n,t)$ used in the proof of Theorem \ref{thm converge} (2),
\[
    Z(T_n, t) = \sum_{d\geq 0}P_{d,n}(q) t^d = \prod_{j\geq 1}\left(\frac{1}{1 - t^j}\right)^{M_{j,n}(q)}
\]
Theorem \ref{thm converge} (1) implies that the $t$-coefficientwise limit as $n\rightarrow\infty$ is simply
\begin{equation}
\label{eqn gen id}
    1 - \tfrac{1}{1-\frac{1}{q}}t = \prod_{d\geq 1}\left(\frac{1}{1-t^d}\right)^{M_{d,\infty}(q)}.
\end{equation}
Consider the well-known \emph{cyclotomic identity} \cite{Met Rota}, or equivalently the Euler product formula for the Hasse-Weil zeta function of $\AA^1(\FF_q)$,
\begin{equation}
\label{eqn cyclo}
    \frac{1}{1 - qt} = \prod_{d\geq 1}\left(\frac{1}{1 - t^d}\right)^{M_{d,1}(q)}.
\end{equation}
Substituting $q \mapsto \frac{1}{1 - \frac{1}{q}}$ and taking reciprocals in \eqref{eqn cyclo} gives
\[
    1 - \tfrac{1}{1-\frac{1}{q}}t = \prod_{d\geq 1}\left(\frac{1}{1-t^d}\right)^{-M_{d,1}\big(\frac{1}{1 - \frac{1}{q}}\big)}.
\]
Comparing exponents with \eqref{eqn gen id} we conclude that
\[
    M_{d,\infty}(q) = - M_{d,1}\Big(\tfrac{1}{1-\frac{1}{q}}\Big).
\]
\end{proof}

The rate of $q$-adic convergence of $M_{d,n}(q)$ may be determined from the proof of Theorem \ref{liminal necklace}.

\begin{cor}
For all $d\geq 1$,
\[
    M_{d,n}(q) \equiv M_{d,\infty}(q) \bmod q^{n+1}.
\]
\end{cor}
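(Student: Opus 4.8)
The plan is to first control the convergence rate of the building blocks $P_{d,n}(q)$ and then transport that control to $M_{d,n}(q)$ through the generating function identity used in the proof of Theorem \ref{liminal necklace}. The first step is purely computational: from the closed form $P_{d,n}(q) = q^{\binom{d+n-1}{n}}\frac{q^{\binom{d+n-1}{n-1}}-1}{q-1}$ in Lemma \ref{lem1} I would read off that $P_{1,n}(q) - P_{1,\infty}(q) = q\frac{q^n-1}{q-1} + \frac{q}{q-1} = \frac{q^{n+1}}{q-1} \in q^{n+1}\QQ\llbracket q\rrbracket$, while for $d \geq 2$ one has $P_{d,\infty}(q) = 0$ and $\binom{d+n-1}{n} \geq \binom{n+1}{n} = n+1$, so that $P_{d,n}(q) \in q^{n+1}\QQ\llbracket q\rrbracket$ as well. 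Hence $P_{d,n}(q) \equiv P_{d,\infty}(q) \bmod q^{n+1}$ for every $d \geq 1$.

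The conceptual engine is that reduction modulo $q^{n+1}$ is a homomorphism of $\QQ$-algebras $\QQ\llbracket q\rrbracket \to \QQ\llbracket q\rrbracket/(q^{n+1})$, so congruences mod $q^{n+1}$ survive any operation built from $\QQ$-linear combinations, products, and power series with rational coefficients applied to elements with constant term $1$. In particular the coefficientwise congruences of the first step say exactly that, as power series in $t$ with coefficients in $\QQ\llbracket q\rrbracket$, $Z(T_n,t) \equiv Z(T_\infty,t) = 1 - \frac{1}{1-\frac{1}{q}}t \bmod q^{n+1}$. Taking logarithms (legitimate since both sides have $t$-constant term $1$) and using $\log\frac{1}{1-t^j} = \sum_{k\geq 1} \frac{t^{jk}}{k}$ turns the product formula $Z(T_n,t) = \prod_{j\geq 1}\big(\frac{1}{1-t^j}\big)^{M_{j,n}(q)}$ into $\sum_{j\geq 1} M_{j,n}(q)\sum_{k\geq 1}\frac{t^{jk}}{k}$; extracting the coefficient of $t^d$ gives $\frac{1}{d}\sum_{j\mid d} j M_{j,n}(q) \equiv \frac{1}{d}\sum_{j\mid d} j M_{j,\infty}(q) \bmod q^{n+1}$. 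Möbius inversion over the divisor lattice is an integer-linear operation, hence preserves the congruence and yields $d\, M_{d,n}(q) \equiv d\, M_{d,\infty}(q) \bmod q^{n+1}$; dividing by the integer $d$ in the $\QQ$-algebra $\QQ\llbracket q\rrbracket/(q^{n+1})$ completes the argument.

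A self-contained alternative avoids logarithms and instead inducts on $d$ using the recursion $M_{d,n}(q) = P_{d,n}(q) - \sum_{\lambda\vdash d,\ \lambda\neq(d)}\prod_{j\geq 1}\multi{M_{j,n}(q)}{m_j(\lambda)}$ together with its limiting counterpart. The base case $d=1$ is the first step. For the inductive step every part $j$ of a partition $\lambda\neq(d)$ satisfies $j<d$, so $M_{j,n}(q)\equiv M_{j,\infty}(q)\bmod q^{n+1}$ by hypothesis; since $\multi{x}{m}$ is a polynomial with rational coefficients, congruent arguments produce congruent values, and products of congruent power series remain congruent, so the entire sum is congruent mod $q^{n+1}$ and the claim for $d$ follows. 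Either way, the only real subtlety --- and the point I would check first --- is that all quantities in sight genuinely lie in $\QQ\llbracket q\rrbracket$, so that $q^{n+1}\QQ\llbracket q\rrbracket$ functions as an honest ideal; this rests on the expansion $\frac{1}{1-\frac{1}{q}} = -q - q^2 - q^3 - \cdots \in q\QQ\llbracket q\rrbracket$, which shows each $M_{d,\infty}(q)$ is a power series rather than merely a rational function. Once that is in place the rest is formal.
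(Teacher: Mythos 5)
Your proposal is correct and follows essentially the same route as the paper: establish $P_{d,n}(q)\equiv P_{d,\infty}(q)\bmod q^{n+1}$ from the closed form in Lemma \ref{lem1}, transfer this to a congruence of the zeta-type products $\prod_{j\geq 1}(1-t^j)^{-M_{j,n}(q)}$, and conclude the congruence of the exponents. The only difference is that you spell out the final extraction step (via logarithms and M\"obius inversion, or alternatively by induction on the recursion for $M_{d,n}(q)$), which the paper leaves implicit; that added care, including the check that $\frac{1}{1-\frac{1}{q}}\in q\QQ\llbracket q\rrbracket$ so everything lives in $\QQ\llbracket q\rrbracket$, is a virtue rather than a deviation.
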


\begin{proof}
Recall that
\[
    P_{d,n}(q) = q^{\binom{d+n-1}{n}}\frac{q^{\binom{d+n-1}{n-1}} - 1}{q-1}.
\]
Since $\binom{d + n - 1}{n} \geq n + 1$ for $d\geq 2$ and
\[
    P_{1,n}(q) = \frac{q^{n+1}-q}{q-1} \equiv - \frac{1}{1 - \frac{1}{q}} \bmod q^{n+1},
\]
it follows that
\[
    \sum_{d\geq 0}P_{d,n}(q)t^d \equiv 1 - \tfrac{1}{1-\frac{1}{q}}t \bmod q^{n+1}.
\]
Thus
\begin{align*}
    \prod_{d\geq 1}\left(\frac{1}{1-t^d}\right)^{M_{d,n}(q)} &= \sum_{d\geq 0}P_{d,n}(q)t^d\\
    &\equiv 1 - \tfrac{1}{1-\frac{1}{q}}t \bmod q^{n+1}\\
    &\equiv \prod_{d\geq 1}\left(\frac{1}{1-t^d}\right)^{M_{d,\infty}(q)} \bmod q^{n+1}
\end{align*}
and thus
\[
    M_{d,n}(q) \equiv M_{d,\infty}(q) \bmod q^{n+1}.
\]
\end{proof}

Notice that the fractional linear transformation $q \longmapsto \frac{1}{1-\frac{1}{q}}$ is an involution. Thus Theorem \ref{liminal necklace} is equivalent to
\[
    M_{d,1}(q) = - M_{d,\infty}\Big(\tfrac{1}{1-\frac{1}{q}}\Big).
\]
Our next result combines the liminal reciprocity relating $M_{d,1}(q)$ and $M_{d,\infty}(q)$ with the \emph{combinatorial reciprocity} identity
\begin{equation}
\label{eqn combo recip}
    \binom{-x}{m} = (-1)^m\multi{x\mathstrut}{m},
\end{equation}
to deduce a striking relationship between factorization statistics of polynomials when $n = 1$ and $n = \infty$.

\begin{thm}[Liminal reciprocity]
\label{thm main reciprocity}
For any partition $\lambda$, let $\ell(\lambda) = \sum_{j\geq 1}m_j(\lambda)$ denote the number of parts of $\lambda$. Then
\begin{align*}
    T_{\lambda,\infty}^\sfr(q) &= (-1)^{\ell(\lambda)} T_{\lambda,1}\Big(\tfrac{1}{1-\frac{1}{q}}\Big),\\
    T_{\lambda,\infty}(q) &= (-1)^{\ell(\lambda)} T_{\lambda,1}^\sfr\Big(\tfrac{1}{1-\frac{1}{q}}\Big).
\end{align*}
\end{thm}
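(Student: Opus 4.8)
The plan is to combine three ingredients that are already established: the product formulas for the limiting type polynomials from Theorem \ref{thm converge}(3), the liminal reciprocity for necklace polynomials from Theorem \ref{liminal necklace}, and the combinatorial reciprocity identity \eqref{eqn combo recip}. Throughout I would abbreviate $\tilde q = \tfrac{1}{1-\frac{1}{q}}$, keeping in mind that $q\mapsto\tilde q$ is an involution.

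First I would treat $T_{\lambda,\infty}^\sfr(q)$. By Theorem \ref{thm converge}(3) we have $T_{\lambda,\infty}^\sfr(q) = \prod_{j\geq 1}\binom{M_{j,\infty}(q)}{m_j(\lambda)}$. Substituting the liminal reciprocity $M_{j,\infty}(q) = -M_{j,1}(\tilde q)$ from Theorem \ref{liminal necklace} turns each factor into $\binom{-M_{j,1}(\tilde q)}{m_j(\lambda)}$, and \eqref{eqn combo recip} rewrites this as $(-1)^{m_j(\lambda)}\multi{M_{j,1}(\tilde q)}{m_j(\lambda)}$. Extracting the signs from the product contributes a global factor $(-1)^{\sum_j m_j(\lambda)} = (-1)^{\ell(\lambda)}$, while the remaining product $\prod_{j\geq 1}\multi{M_{j,1}(\tilde q)}{m_j(\lambda)}$ is exactly $T_{\lambda,1}(\tilde q)$ by the definition of the general type polynomial at $n=1$. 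This yields the first identity.

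The second identity follows the same template but uses the inverse form of combinatorial reciprocity. Substituting $x\mapsto -x$ in \eqref{eqn combo recip} gives $\multi{-x}{m} = (-1)^m\binom{x}{m}$. Starting from $T_{\lambda,\infty}(q) = \prod_{j\geq 1}\multi{M_{j,\infty}(q)}{m_j(\lambda)}$, the substitution $M_{j,\infty}(q) = -M_{j,1}(\tilde q)$ followed by this inverse identity converts each factor to $(-1)^{m_j(\lambda)}\binom{M_{j,1}(\tilde q)}{m_j(\lambda)}$. Collecting the signs into $(-1)^{\ell(\lambda)}$ and recognizing the surviving product as $T_{\lambda,1}^\sfr(\tilde q)$ completes the argument.

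I expect no serious obstacle: once Theorem \ref{liminal necklace} is in hand the result is essentially a bookkeeping exercise. The one point that deserves care is that the two identities invoke the two complementary directions of \eqref{eqn combo recip}, and it is precisely this that produces the swap of squarefree and general factorization types. The stated direction $\binom{-x}{m} = (-1)^m\multi{x}{m}$ carries the squarefree count $T_{\lambda,\infty}^\sfr$ to the general count $T_{\lambda,1}$, while the inverse direction carries the general count $T_{\lambda,\infty}$ to the squarefree count $T_{\lambda,1}^\sfr$. This exchange is the concrete manifestation of the combinatorial reciprocity phenomenon referenced in the introduction, and by the involutivity of $q\mapsto\tilde q$ each identity may be read in either direction after swapping the subscripts $1$ and $\infty$.
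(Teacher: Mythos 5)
Your proposal is correct and follows exactly the paper's argument: apply the product formulas from Theorem \ref{thm converge}(3), substitute $M_{j,\infty}(q) = -M_{j,1}\big(\tfrac{1}{1-\frac{1}{q}}\big)$ from Theorem \ref{liminal necklace}, and invoke the two complementary forms of the combinatorial reciprocity identity \eqref{eqn combo recip} to collect the sign $(-1)^{\ell(\lambda)}$ and swap the squarefree and general type polynomials. No gaps; this matches the paper's proof step for step.
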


\begin{proof}
Theorem \ref{thm converge} (3), Theorem \ref{liminal necklace}, and the combinatorial reciprocity identity \eqref{eqn combo recip} imply that
\begin{align*}
    T_{\lambda,\infty}^\sfr(q) &= \prod_{j\geq 1}\binom{M_{j,\infty}(q)}{m_j(\lambda)}\\
    &= \prod_{j\geq 1}\binom{-M_{j,1}\Big(\tfrac{1}{1-\frac{1}{q}}\Big)}{m_j(\lambda)}\\
    &= \prod_{j\geq 1}(-1)^{m_j(\lambda)}\multi{M_{j,1}\Big(\tfrac{1}{1-\frac{1}{q}}\Big)}{m_j(\lambda)}\\
    &= (-1)^{\ell(\lambda)} T_{\lambda,1}\Big(\tfrac{1}{1-\frac{1}{q}}\Big).
\end{align*}
The second identity follows from a parallel computation noting that \eqref{eqn combo recip} is equivalent to
\[
    \multi{-x}{m} = (-1)^m\binom{x}{m}.
\]
\end{proof}

The liminal reciprocity identity
\[
    T_{\lambda,\infty}^\sfr(q) = (-1)^{\ell(\lambda)}T_{\lambda,1}\Big(\tfrac{1}{1-\frac{1}{q}}\Big)
\]
relates the limiting number of squarefree polynomials with factorization type $\lambda$ in $\FF_q[x_1,x_2,\ldots,x_n]$ as $n\rightarrow \infty$ to the number of polynomials $\FF_q[x]$ with factorization type $\lambda$ with no restrictions on factor multiplicity. This relationship is, to us, rather mysterious. It would be interesting to find a conceptual explanation for this relationship between infinite and one dimensional factorization statistics.

\section{Liminal first moments of squarefree factorization statistics}
\label{sec liminal reps}
A \emph{factorization statistic} is a function $P$ defined on $\poly_{d,n}(\FF_q)$ such that $P(f)$ only depends on the factorization type of $f\in \poly_{d,n}(\FF_q)$. Equivalently, $P$ is a function defined on the partitions of the degree $d$, or a class function of the symmetric group $S_d$. In \cite{Hyde} we determined explicit formulas for the first moments of factorization statistics on $\poly_{d,1}(\FF_q)$ and $\poly_{d,1}^\sfr(\FF_q)$ in terms of the characters of symmetric group representations related to the cohomology of point configurations in Euclidean space.

\begin{thm}[{\cite[Thm. 2.4, Thm. 2.5]{Hyde}, \cite[Prop. 4.1]{CEF}}]
\label{thm univariate twisted gl}
Let $P$ be a factorization statistic, and let $\psi_d^k$, $\phi_d^k$ be the characters of the $S_d$-representations $H^{2k}(\PConf_d(\RR^3),\QQ)$ and $H^k(\PConf_d(\RR^2),\QQ)$ respectively. Then
\begin{align*}
    (1) \sum_{f\in \poly_{d,1}(\FF_q)} P(f) &= \sum_{k=0}^{d-1} \langle P, \psi_d^k \rangle q^{d-k}\\
    (2) \sum_{f\in \poly_{d,1}^\sfr(\FF_q)} P(f) &= \sum_{k=0}^{d-1} (-1)^k\langle P, \phi_d^k \rangle q^{d-k},
\end{align*}
where $\langle P, \psi_d^k\rangle = \frac{1}{d!}\sum_{\tau \in S_d}P(\tau)\psi_d^k(\tau)$ is the standard inner product of class functions on $S_d$.
\end{thm}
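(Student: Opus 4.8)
The plan is to reduce both identities to a single pairing with symmetric functions and then to match generating functions. Since a factorization statistic is a class function on $S_d$ and both sides of each identity are $\QQ$-linear in $P$, I would first pass to the Frobenius characteristic $\ch$, under which the standard inner product of class functions becomes the Hall inner product on symmetric functions. Writing $\ch(P) = \sum_{\lambda\vdash d}\tfrac{P(\lambda)}{z_\lambda}p_\lambda$ and grouping the left-hand sums by factorization type, one checks
\[
    \sum_{f\in\poly_{d,1}(\FF_q)}P(f) = \big\langle \ch(P),\, Z_d\big\rangle, \qquad Z_d := \sum_{\lambda\vdash d}T_{\lambda,1}(q)\,p_\lambda,
\]
and similarly $\sum_{f\in\poly_{d,1}^\sfr(\FF_q)}P(f) = \langle \ch(P), Z_d^\sfr\rangle$ with $Z_d^\sfr := \sum_\lambda T_{\lambda,1}^\sfr(q)\,p_\lambda$. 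Because $\ch$ is an isometry, each identity becomes the assertion that $Z_d$ (resp. $Z_d^\sfr$) equals an explicit combination of the Frobenius characteristics of the cohomology of $\PConf_d(\RR^3)$ (resp. $\PConf_d(\RR^2)$), namely $Z_d = \sum_{k} q^{d-k}\ch(\psi_d^k)$ and $Z_d^\sfr = \sum_{k}(-1)^k q^{d-k}\ch(\phi_d^k)$.

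For the squarefree identity I would argue geometrically, following the twisted Grothendieck--Lefschetz philosophy. The set $\poly_{d,1}^\sfr(\FF_q)$ is exactly the set of $\FF_q$-points of the configuration space $\Conf_d(\AA^1) = \PConf_d(\AA^1)/S_d$, the complement of the discriminant in $\AA^d$, and for squarefree $f$ the Frobenius permutes the $d$ geometric roots with cycle type equal to $\lambda(f)$. Hence for the $\QQ_\ell$-local system $\mathcal L_V$ attached to an $S_d$-representation $V$ the trace of Frobenius on the stalk at $f$ is $\chi_V(\lambda(f))$, and the trace formula gives
\[
    \sum_{f}\chi_V(\lambda(f)) = \sum_i(-1)^i\,\mathrm{Tr}\!\big(\frob \mid H^i_c(\Conf_d(\AA^1),\mathcal L_V)\big) = \sum_i (-1)^i\,\mathrm{Tr}\!\big(\frob\mid (H^i_c(\PConf_d(\AA^1))\otimes V)^{S_d}\big).
\]
The braid-arrangement complement $\PConf_d(\AA^1)$ is smooth affine of dimension $d$ with cohomology pure of Tate type, $\frob$ acting on $H^k$ by $q^k$; since $S_d$ permutes coordinates by $\CC$-linear, hence orientation-preserving, maps, Poincar\'e duality is $S_d$-equivariant and makes $\frob$ act on $H^{2d-k}_c$ by $q^{d-k}$ with sign $(-1)^{2d-k}=(-1)^k$. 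The comparison theorem then identifies these groups, as $S_d$-representations, with the singular cohomology of $\PConf_d(\CC)=\PConf_d(\RR^2)$, whose degree-$k$ character is $\phi_d^k$; letting $V$ range over the irreducibles reassembles identity $(2)$.

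For the general-polynomial identity the space of all monic polynomials is the contractible $\AA^d$, so no direct point count produces the $\RR^3$ cohomology, and I would instead prove the symmetric-function identity for $Z_d$ by generating functions. Using $T_{\lambda,1}(q)=\prod_j\multi{M_{j,1}(q)}{m_j(\lambda)}$ and $\sum_m\multi{M}{m}x^m=(1-x)^{-M}$ one gets the Euler products
\[
    \sum_{d\geq 0}Z_d\,t^d = \prod_{j\geq 1}\big(1-p_j t^j\big)^{-M_{j,1}(q)}, \qquad \sum_{d\geq 0}Z_d^\sfr\,t^d = \prod_{j\geq 1}\big(1+p_j t^j\big)^{M_{j,1}(q)}.
\]
The conceptual point is that the passage from squarefree to arbitrary polynomials is the passage from subsets to multisets, i.e. from the anticommutative ($\RR^2$) to the commutative ($\RR^3$) arrangement cohomology; concretely the characters are related by the sign twist $\psi_d^k=\sgn_d^{\otimes k}\otimes\phi_d^k$. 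I would establish the matching by inserting the classical formula for the $S_d$-equivariant Poincar\'e series of $\PConf_d(\RR^3)$, itself an Euler product over power sums coming from the Lehrer--Solomon description of arrangement cohomology via the partition lattice, and verifying it agrees with $\prod_j(1-p_jt^j)^{-M_{j,1}(q)}$ after the substitution recording the weight grading by $q$. The cyclotomic identity \eqref{eqn cyclo}, under the specialization $p_j\mapsto 1$, is precisely the shadow of this matching that recovers the total count $q^d$.

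The hard part is this final matching for the general case. There are two routes, and both demand care with bookkeeping: either (a) identify the equivariant Poincar\'e series of $\PConf_d(\RR^3)$, with its $q$-weight, directly with the Euler product above, or (b) transfer the already-proven $\RR^2$ statement to $\RR^3$ using the sign twist $\psi_d^k=\sgn_d^{\otimes k}\otimes\phi_d^k$ together with the subset-versus-multiset relation between $\sum_d Z_d^\sfr t^d$ and $\sum_d Z_d t^d$. In either route, tracking the signs $(-1)^k$ and the exact power of $q$ attached to each cohomological degree is the delicate step; by contrast the inputs for the squarefree case (purity of the braid arrangement, $S_d$-equivariance of Poincar\'e duality, and the comparison theorem) are standard.
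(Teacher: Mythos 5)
First, a point of order: the paper does not prove Theorem \ref{thm univariate twisted gl} at all --- it is imported from \cite{CEF} (identity (2)) and \cite{Hyde} (identity (1), together with a second proof of (2)), so there is no in-paper argument to compare against; the relevant comparison is with those references. Measured against them, your reduction of both identities to the symmetric-function statements $Z_d = \sum_k q^{d-k}\ch(\psi_d^k)$ and $Z_d^\sfr = \sum_k(-1)^kq^{d-k}\ch(\phi_d^k)$ is correct: linearity in $P$ together with $\langle p_\lambda,p_\mu\rangle = z_\lambda\delta_{\lambda\mu}$ makes each identity equivalent to $T_{\lambda,1}(q) = \tfrac{1}{z_\lambda}\sum_k\psi_d^k(\lambda)q^{d-k}$ and its squarefree analogue, which is exactly the form of \cite[Thm.~2.1]{Hyde} quoted later in this paper. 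Your sketch of (2) is a faithful compression of the Church--Ellenberg--Farb proof (identification of $\poly_{d,1}^\sfr(\FF_q)$ with the $\FF_q$-points of $\Conf_d(\AA^1)$, the trace formula for the local system attached to $V$, purity of the braid arrangement, equivariant Poincar\'e duality, comparison with $\PConf_d(\RR^2)$), and the Euler products $\sum_d Z_dt^d=\prod_j(1-p_jt^j)^{-M_{j,1}(q)}$ and $\sum_d Z_d^\sfr t^d=\prod_j(1+p_jt^j)^{M_{j,1}(q)}$ are correct.

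For identity (1), however, the proposal stops exactly where the theorem begins. The Frobenius-characteristic reduction and the Euler product for $\sum_d Z_dt^d$ are elementary bookkeeping; the entire content of (1) is the ``final matching'' of $\prod_j(1-p_jt^j)^{-M_{j,1}(q)}$ with $\sum_{d}t^d\sum_k q^{d-k}\ch(\psi_d^k)$, and you explicitly defer it, offering two unexecuted routes. Route (a) needs the $S_d$-equivariant, $q$-weighted Poincar\'e series of $\PConf_d(\RR^3)$ in precisely this Euler-product form; that is a genuine theorem (a Lehrer--Solomon / partition-lattice computation), not a formal manipulation, and invoking it without statement or proof makes the argument carry no weight: the identity you would cite is equivalent to the identity you are proving. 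Route (b) is in worse shape than you indicate: the twist $\psi_d^k=\sgn_d^{\otimes k}\otimes\phi_d^k$ corresponds on characteristics to applying the involution $\omega$ degree by degree, but since $\omega(p_j)=(-1)^{j-1}p_j$, applying $\omega$ to $\sum_d Z_d^\sfr t^d$ yields the multiplicative \emph{inverse} of $\sum_d Z_d(-t)^d$, so the subset-to-multiset passage mixes cohomological degrees across different $d$ and does not transfer the $\RR^2$ identity to the $\RR^3$ one term by term. In short, (2) is essentially complete modulo standard citations, but (1) --- which is \cite[Thm.~2.4]{Hyde} and is established there by an actual generating-function computation --- is not proved by this proposal.
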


The identity (2) was first shown by Church, Ellenberg, and Farb \cite[Prop. 4.1]{CEF} using algebro-geometric methods including the Grothendieck-Lefschetz trace formula. They called this identity the \emph{twisted Grothendieck-Lefschetz formula}. We gave a new proof in \cite[Thm. 2.5]{Hyde} using a generating function argument. Our results in \cite{Hyde} are stated in terms of expected values instead of first moments; this distinction has little effect in the $\poly_{d,1}(\FF_q)$ case, but does change the family of representations in the squarefree case $\poly_{d,1}^\sfr(\FF_q)$: since the total number of degree $d$ squarefree polynomials in $\FF_q[x]$ is $P_{d,1}^\sfr(q) = q^d - q^{d-1}$, dividing the first moment of a factorization statistic by $q^d - q^{d-1}$ results in a polynomial with different coefficients. This version of (2) appears in \cite[Prop. 4.1]{CEF}.

The next result combines Theorem \ref{thm univariate twisted gl} with liminal reciprocity to express the limiting first moments of squarefree factorization statistics in terms of characters of symmetric group representations.

\begin{thm}
\label{thm liminal twisted GL}
Let $P$ be a factorization statistic, and let $\sigma_d^k$ be the character of the $S_d$-representation
\begin{equation}
\label{eqn rep def}
    \Sigma_d^k = \bigoplus_{j=k}^{d-1} \sgn_d \otimes H^{2j}(\PConf_d(\RR^3),\QQ)^{\oplus \binom{j}{k}}.
\end{equation}
Then
\[
    \lim_{n\rightarrow\infty} \sum_{f\in \poly_{d,n}^\sfr(\FF_q)} P(f) = \frac{1}{(1 - q)^d}\sum_{k=0}^d (-1)^k \langle P, \sigma_d^k \rangle q^{d-k}.
\]
\end{thm}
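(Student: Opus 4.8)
The plan is to reduce the $n=\infty$ squarefree first moment to a univariate ($n=1$) first moment taken at a twisted field size via liminal reciprocity, and then apply the univariate formula of Theorem~\ref{thm univariate twisted gl}~(1). First, since $P$ is a factorization statistic it is constant on each type class, so for every $n$ one has $\sum_{f\in\poly_{d,n}^\sfr(\FF_q)}P(f)=\sum_{\lambda\vdash d}P(\lambda)\,T_{\lambda,n}^\sfr(q)$, a finite $\QQ$-linear combination of the polynomials $T_{\lambda,n}^\sfr(q)$. By Theorem~\ref{thm converge}~(3) each summand converges coefficientwise as $n\to\infty$, so the limit exists and equals $\sum_{\lambda\vdash d}P(\lambda)\,T_{\lambda,\infty}^\sfr(q)$.

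Next I would invoke the liminal reciprocity identity $T_{\lambda,\infty}^\sfr(q)=(-1)^{\ell(\lambda)}T_{\lambda,1}\big(\tfrac{1}{1-\frac1q}\big)$ from Theorem~\ref{thm main reciprocity}. Writing $u=\tfrac{1}{1-\frac1q}=\frac{q}{q-1}$ and using $(-1)^{\ell(\lambda)}=(-1)^d\sgn_d(\lambda)$ (valid since $\sgn_d(\lambda)=(-1)^{d-\ell(\lambda)}$), the limit becomes
\[
    \lim_{n\to\infty}\sum_{f\in\poly_{d,n}^\sfr(\FF_q)}P(f)=(-1)^d\sum_{\lambda\vdash d}\big(P\,\sgn_d\big)(\lambda)\,T_{\lambda,1}(u).
\]
The right-hand sum is exactly the univariate first moment of the factorization statistic $P\,\sgn_d$ at field size $u$, so Theorem~\ref{thm univariate twisted gl}~(1) applies and gives $(-1)^d\sum_{k=0}^{d-1}\langle P\,\sgn_d,\psi_d^k\rangle\,u^{d-k}$. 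Using $\langle P\,\sgn_d,\psi_d^k\rangle=\langle P,\sgn_d\otimes\psi_d^k\rangle$, this rewrites the limit in terms of the characters of $\sgn_d\otimes H^{2k}(\PConf_d(\RR^3),\QQ)$.

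The final and most delicate step is the change of variables $u=\frac{q}{q-1}$, which is precisely where the representation $\Sigma_d^k$ is forced to appear. I would substitute $u^{d-k}=\frac{q^{d-k}(q-1)^k}{(q-1)^d}$, expand $(q-1)^k$ by the binomial theorem to get $q^{d-k}(q-1)^k=\sum_{m=0}^k\binom{k}{m}(-1)^m q^{d-m}$, and then swap the order of summation so that for each fixed power $q^{d-m}$ the coefficient becomes $\sum_{k=m}^{d-1}\binom{k}{m}\langle P,\sgn_d\otimes\psi_d^k\rangle$. By the definition \eqref{eqn rep def} of $\Sigma_d^k$ this inner sum is exactly $\langle P,\sigma_d^m\rangle$, so the whole expression collapses to $\frac{(-1)^d}{(q-1)^d}\sum_{m=0}^{d-1}(-1)^m q^{d-m}\langle P,\sigma_d^m\rangle$. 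Finally $\frac{(-1)^d}{(q-1)^d}=\frac{1}{(1-q)^d}$, and since $\Sigma_d^d=0$ (its defining sum is empty) the range may be extended to $k=d$, yielding the claimed formula. The main obstacle is entirely in this bookkeeping: verifying that the binomial expansion of $(q-1)^k$ reorganizes the $\psi_d^k$-multiplicities into exactly the $\binom{j}{k}$-multiplicities defining $\Sigma_d^k$, which is the conceptual reason the particular direct sum in \eqref{eqn rep def} governs the liminal squarefree moments.
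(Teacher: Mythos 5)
Your proposal is correct and follows essentially the same route as the paper: liminal reciprocity (Theorem~\ref{thm main reciprocity}) to pass to $T_{\lambda,1}$ at $\tfrac{1}{1-\frac1q}$, the univariate $\psi_d^k$-expansion, and the binomial reorganization of $q^{d-k}(q-1)^k$ that produces the multiplicities $\binom{j}{k}$ defining $\Sigma_d^k$. The only cosmetic difference is order of operations: the paper first proves the per-partition identity for $T_{\lambda,\infty}^\sfr(q)$ (Theorem~\ref{thm rep interp}, using the pointwise form $T_{\lambda,1}(q)=\frac{1}{z_\lambda}\sum_k\psi_d^k(\lambda)q^{d-k}$) and then sums against $P$, whereas you sum against $P$ first and invoke the aggregated Theorem~\ref{thm univariate twisted gl}~(1) applied to $P\,\sgn_d$; the two are equivalent.
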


Theorem \ref{thm liminal twisted GL} follows from the following representation theoretic interpretation of the liminal squarefree type polynomials $T_{\lambda,\infty}^\sfr(q)$. Recall that for a partition $\lambda$ the liminal squarefree type polynomial $T_{\lambda,\infty}^\sfr(q)$ is defined by
\[
    T_{\lambda,\infty}^\sfr(q) = \lim_{n\rightarrow\infty} T_{\lambda,n}^\sfr(q),
\]
where $T_{\lambda,n}^\sfr(q)$ is the number of monic squarefree polynomials in $\FF_q[x_1,x_2,\ldots, x_n]$ with factorization type $\lambda$. 

\begin{thm}
\label{thm rep interp}
Let $\lambda \vdash d$ be a partition, and let $\sigma_d^k$ be the character of the $S_d$-representation $\Sigma_d^k$ defined in \eqref{eqn rep def}. Then
\[
    T_{\lambda,\infty}^\sfr(q) = \frac{1}{z_\lambda(1 - q)^d}\sum_{k=0}^{d-1} (-1)^k \sigma_d^k(\lambda) q^{d-k},
\]
where $z_\lambda = \prod_{j\geq 1} j^{m_j(\lambda)}m_j(\lambda)!$ is the number of permutations in $S_d$ commuting with a permutation of cycle type $\lambda$.
\end{thm}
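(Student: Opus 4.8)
The plan is to combine the univariate twisted Grothendieck--Lefschetz formula (Theorem \ref{thm univariate twisted gl}(1)) with liminal reciprocity (Theorem \ref{thm main reciprocity}), and then unwind the resulting fractional-linear substitution into the definition of $\Sigma_d^k$. No new geometric or representation-theoretic input should be required: the whole content is algebraic manipulation linking the two cited theorems.

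First I would realize $T_{\lambda,1}(q)$ as a first moment. Let $\mathbf{1}_\lambda$ be the factorization statistic equal to the indicator of type $\lambda$, so that $T_{\lambda,1}(q) = \sum_{f\in\poly_{d,1}(\FF_q)}\mathbf{1}_\lambda(f)$. Theorem \ref{thm univariate twisted gl}(1) then gives $T_{\lambda,1}(q) = \sum_{k=0}^{d-1}\langle\mathbf{1}_\lambda,\psi_d^k\rangle\, q^{d-k}$. Since $\mathbf{1}_\lambda$ is supported on the single conjugacy class of cycle type $\lambda$, whose cardinality is $d!/z_\lambda$, the inner product collapses to $\langle\mathbf{1}_\lambda,\psi_d^k\rangle = \psi_d^k(\lambda)/z_\lambda$, yielding the clean closed form $T_{\lambda,1}(q) = \tfrac{1}{z_\lambda}\sum_{k=0}^{d-1}\psi_d^k(\lambda)\, q^{d-k}$.

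Next I would apply liminal reciprocity, $T_{\lambda,\infty}^\sfr(q) = (-1)^{\ell(\lambda)}\,T_{\lambda,1}\big(\tfrac{1}{1-\frac{1}{q}}\big)$, and substitute $\tfrac{1}{1-\frac{1}{q}} = \tfrac{q}{q-1} = -\tfrac{q}{1-q}$ into the formula above. Each summand then becomes $(-1)^{d-k}q^{d-k}(1-q)^{k}/(1-q)^{d}$ after clearing to the common denominator $(1-q)^d$. Expanding $(1-q)^k$ by the binomial theorem, reindexing by the exponent of $q$, and exchanging the order of summation, the coefficient of $q^{d-m}/(1-q)^d$ emerges as a sum of the shape $\sum_{k=m}^{d-1}\binom{k}{m}\psi_d^k(\lambda)$, up to explicit signs.

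The crux of the argument, and where I expect the main bookkeeping effort to lie, is recognizing this binomial sum as the character $\sigma_d^m$. By the definition \eqref{eqn rep def} of $\Sigma_d^m$, its character equals $\sgn_d\cdot\sum_{j=m}^{d-1}\binom{j}{m}\psi_d^j$, so $\sum_{k=m}^{d-1}\binom{k}{m}\psi_d^k(\lambda) = \sgn_d(\lambda)\,\sigma_d^m(\lambda)$ (using $\sgn_d(\lambda)^2 = 1$). With this substitution the remaining signs collapse: combining the $(-1)^{\ell(\lambda)}$ from reciprocity with $\sgn_d(\lambda) = (-1)^{d-\ell(\lambda)}$ produces $(-1)^d$, which together with the $(-1)^{d-m}$ from the binomial expansion gives exactly the asserted $(-1)^m$. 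The only genuinely delicate point is keeping every sign and index-shift correct; the representation $\Sigma_d^k$ is engineered precisely so that the binomial coefficients $\binom{j}{k}$ in its definition match those arising from expanding $(1-q)^j$, so the identification is forced once the reindexing is set up carefully.
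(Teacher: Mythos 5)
Your proposal is correct and follows essentially the same route as the paper: the paper likewise starts from the pointwise identity $T_{\lambda,1}(q) = \tfrac{1}{z_\lambda}\sum_{k=0}^{d-1}\psi_d^k(\lambda)q^{d-k}$ (cited directly from \cite{Hyde} rather than rederived via the indicator statistic $\mathbf{1}_\lambda$, a cosmetic difference), applies Theorem \ref{thm main reciprocity}, expands $(q-1)^j$ binomially, and reindexes to read off $\sigma_d^k$. Your sign bookkeeping, including the collapse of $(-1)^{\ell(\lambda)}\sgn_d(\lambda)(-1)^{d-m}$ to $(-1)^m$, matches the paper's computation.
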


\begin{proof}
Let $\psi_d^k$ be the character of the $S_d$-representation $H^{2k}(\PConf_d(\RR^3),\QQ)$. In \cite[Thm. 2.1]{Hyde} we showed that for all partitions $\lambda\vdash d$,
\[
    T_{\lambda,1}(q) = \frac{1}{z_\lambda}\sum_{k=0}^{d-1} \psi_d^k(\lambda)q^{d-k}.
\]
Thus, Theorem \ref{thm main reciprocity} gives
\begin{align*}
    T_{\lambda,\infty}^\sfr(q) &= (-1)^{\ell(\lambda)} T_{\lambda,1}\Big(\tfrac{1}{1 - \frac{1}{q}}\Big)\\
    &= \frac{1}{z_\lambda}\sum_{j=0}^{d-1} (-1)^{\ell(\lambda)}\psi_d^j(\lambda)\bigg(\frac{1}{1 - \frac{1}{q}}\bigg)^{d-j}\\
    &= \frac{1}{z_\lambda(1 - q)^d}\sum_{j=0}^{d-1} (-1)^{d - \ell(\lambda)} \psi_d^j(\lambda) q^{d-j} (q - 1)^j\\
    &= \frac{1}{z_\lambda(1 - q)^d}\sum_{j=0}^{d-1} \sgn_d(\lambda) \psi_d^j(\lambda) q^{d-j} \sum_{k=0}^j (-1)^k\binom{j}{k} q^{j-k}\\
    &= \frac{1}{z_\lambda(1 - q)^d}\sum_{k=0}^{d-1} (-1)^k \bigg(\sum_{j=k}^d \binom{j}{k}\sgn_d(\lambda) \psi_d^j(\lambda)\bigg) q^{d-k}\\
    &= \frac{1}{z_\lambda(1 - q)^d}\sum_{k=0}^{d-1}(-1)^k \sigma_d^k(\lambda) q^{d-k}.
\end{align*}
\end{proof}

\noindent We now prove Theorem \ref{thm rep interp}.

\begin{proof}
Since $P$ depends only on factorization type, the limiting first moment of $P$ may be rewritten as
\[
    \lim_{n\rightarrow\infty} \sum_{f\in \poly_{d,n}^\sfr(\FF_q)} P(f) = \lim_{n\rightarrow\infty} \sum_{\lambda \vdash d} P(\lambda) T_{\lambda,n}^\sfr(q) = \sum_{\lambda\vdash d} P(\lambda)T_{\lambda,\infty}^\sfr(q).
\]
Then Theorem \ref{thm rep interp} implies
\begin{align*}
    \sum_{\lambda\vdash d} P(\lambda)T_{\lambda,\infty}^\sfr(q) &= \sum_{\lambda\vdash d} \frac{1}{z_\lambda(1 - q)^d}\sum_{k=0}^{d-1} (-1)^k P(\lambda)\sigma_d^k(\lambda) q^{d-k}\\
    &=  \frac{1}{(1 - q)^d}\sum_{k=0}^{d-1} (-1)^k \sum_{\lambda\vdash d}\frac{P(\lambda)\sigma_d^k(\lambda)}{z_\lambda} q^{d-k}\\
    &= \frac{1}{(1 - q)^d}\sum_{k=0}^{d-1} (-1)^k \langle P, \sigma_d^k \rangle q^{d-k}.
\end{align*}
\end{proof}

The coefficients of $T_{\lambda,1}^\sfr(q)$ also have representation theoretic interpretations, which suggests that we might hope for a version of Theorem \ref{thm rep interp} for the limiting first moments of factorization statistics on $\poly_{d,n}(\FF_q)$. However, computations show that the coefficients of $T_{\lambda,\infty}(q)$ are determined by virtual characters, unlike those of $T_{\lambda,\infty}^\sfr(q)$. Since this is what we would expect for an arbitrary class function valued in $\frac{1}{z_\lambda}\ZZ$ we do not pursue it.

In \cite{Hyde} we pose the question of finding a geometric interpretation of Theorem \ref{thm univariate twisted gl} which explains the connection between the configuration space $\PConf_d(\RR^3)$ and factorization statistics of degree $d$ polynomials over $\FF_q$. Going further, we would like to know any conceptual explanation for Theorem \ref{thm rep interp}, be it geometric or combinatorial. The sequence of representations $\Sigma_d^k$ is unfamiliar to us; some basic properties are collected below in Proposition \ref{liminal properties} with the hope that they may be recognized by the reader.

The representation theoretic interpretation of the coefficients of $T_{\lambda,\infty}^\sfr(q)$ was discovered empirically by the author pursuing generalizations of squarefree splitting measures to multivariate polynomials. It was in the course of trying to establish this connection with representation theory that the liminal reciprocity and all the results of \cite{Hyde} were found.

\subsection{Example}

We demonstrate the liminal reciprocity identity of Theorem \ref{thm main reciprocity} by computing the expected value of the sign statistic $\sgn_d$ on degree $d$ univariate polynomials $\poly_{d,1}(\FF_q)$ and the limiting expected value of $\sgn_d$ on squarefree degree $d$ polynomials $\poly_{d,\infty}^\sfr(\FF_q)$.

Let $\sgn_d$ be the sign character of $S_d$. Note that $\sgn_d(\lambda) = (-1)^d(-1)^{\ell(\lambda)}$, where $\ell(\lambda) = \sum_{j\geq 1} m_j(\lambda)$ is the number of parts of $\lambda$. Recall that $P_{d,n}(q) = |\poly_{d,n}(\FF_q)|$ and $P_{d,n}^\sfr(q) = |\poly_{d,n}^\sfr(\FF_q)|$.

\begin{prop}
\label{sgn prop}
Let $d\geq 1$.
\begin{enumerate}
    \item The expected value $E_{d,1}(\sgn_d)$ of the sign statistic on the set $\poly_{d,1}(\FF_q)$ is given by
    \[
        E_{d,1}(\sgn_d) := \frac{1}{P_{d,1}(q)}\sum_{f\in \poly_{d,1}(\FF_q)} \sgn_d(f) =  \frac{1}{q^{\lfloor d/2 \rfloor}}.
    \]
    \item The limiting expected value $E_{d,\infty}^\sfr(\sgn_d)$ of the sign statistic on the set $\poly_{d,n}^\sfr(\FF_q)$ as $n\rightarrow\infty$ is given by
    \[
        E_{d,\infty}^\sfr(\sgn_d) := \lim_{n\rightarrow\infty} \frac{1}{P_{d,n}^\sfr(q)}\sum_{f\in \poly_{d,n}^\sfr(\FF_q)} \sgn_d(f) = \left(\frac{1}{1-\frac{1}{q}}\right)^{\lfloor d/2 \rfloor},
    \]
    where the limit is taken $1/q$-adically.
\end{enumerate}
\end{prop}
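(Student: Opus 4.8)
The plan is to prove (1) by a direct generating-function computation and then to obtain (2) from the same circle of ideas using liminal reciprocity (Theorem \ref{thm main reciprocity}) together with the limit $P_{d,\infty}^\sfr(q)$ from Theorem \ref{thm converge} (2).

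For part (1), I would exploit that $\sgn_d$ is multiplicative over factorization type: an irreducible factor of degree $j$ is a single part of size $j$ and contributes a sign $(-1)^{j-1}$. Weighting each degree-$j$ irreducible by $(-1)^{j-1}$ and using $(1-s)^{-M}=\sum_m\multi{M}{m}s^m$ with $s=(-1)^{j-1}t^j$, unique factorization gives the Euler product
\[
F(t):=\sum_{d\geq 0}\Big(\sum_{f\in\poly_{d,1}(\FF_q)}\sgn_d(f)\Big)t^d=\prod_{j\geq 1}\Big(\frac{1}{1-(-1)^{j-1}t^j}\Big)^{M_{j,1}(q)}=\prod_{j\geq 1}\big(1+(-1)^jt^j\big)^{-M_{j,1}(q)}.
\]
I would then evaluate $F(t)$ from the cyclotomic identity \eqref{eqn cyclo} via two substitutions: $t\mapsto -t$ gives $\frac{1}{1+qt}=\prod_j(1-(-1)^jt^j)^{-M_{j,1}(q)}$, and $t\mapsto t^2$ gives $\frac{1}{1-qt^2}=\prod_j(1-t^{2j})^{-M_{j,1}(q)}$. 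Since $\big(1+(-1)^jt^j\big)\big(1-(-1)^jt^j\big)=1-t^{2j}$, multiplying $F(t)$ by $\frac{1}{1+qt}$ collapses the product to $\frac{1}{1-qt^2}$, whence $F(t)=\frac{1+qt}{1-qt^2}$. Reading off coefficients yields $[t^d]F(t)=q^{\lceil d/2\rceil}$, and dividing by $P_{d,1}(q)=q^d$ gives $E_{d,1}(\sgn_d)=q^{\lceil d/2\rceil-d}=q^{-\lfloor d/2\rfloor}$.

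For part (2), the squarefree first moment is the finite sum $\sum_{\lambda\vdash d}\sgn_d(\lambda)T_{\lambda,n}^\sfr(q)$, so its coefficientwise limit is $\sum_{\lambda\vdash d}\sgn_d(\lambda)T_{\lambda,\infty}^\sfr(q)$. Applying Theorem \ref{thm main reciprocity} in the form $T_{\lambda,\infty}^\sfr(q)=(-1)^{\ell(\lambda)}T_{\lambda,1}\big(\tfrac{1}{1-1/q}\big)$ and using the key cancellation $\sgn_d(\lambda)(-1)^{\ell(\lambda)}=(-1)^{d-\ell(\lambda)}(-1)^{\ell(\lambda)}=(-1)^d$, which no longer depends on $\lambda$, the numerator collapses to
\[
(-1)^d\sum_{\lambda\vdash d}T_{\lambda,1}\big(\tfrac{1}{1-1/q}\big)=(-1)^dP_{d,1}\big(\tfrac{1}{1-1/q}\big)=(-1)^d\big(\tfrac{1}{1-1/q}\big)^{d},
\]
using $P_{d,1}(q)=q^d$. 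Dividing by the denominator $P_{d,\infty}^\sfr(q)=(-1)^d\big(\tfrac{1}{1-1/q}\big)^{\lfloor(d+1)/2\rfloor}$ from Theorem \ref{thm converge} (2) and simplifying with $d-\lfloor(d+1)/2\rfloor=\lfloor d/2\rfloor$ produces $\big(\tfrac{1}{1-1/q}\big)^{\lfloor d/2\rfloor}$, as claimed.

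The short evaluation of $F(t)$ is the computational heart of (1), but I expect the main obstacle to lie in part (2): one must justify that the limiting expected value equals the quotient of the two separate coefficientwise limits, which is not automatic for a ratio since a limit of quotients need not be the quotient of limits. The resolution is to verify that the low-order terms of $P_{d,n}^\sfr(q)$ (and of the numerator) stabilize, which follows from the congruence $M_{d,n}(q)\equiv M_{d,\infty}(q)\bmod q^{n+1}$ established above. The conceptually satisfying feature is the $\lambda$-independent cancellation $\sgn_d(\lambda)(-1)^{\ell(\lambda)}=(-1)^d$, which is precisely what lets liminal reciprocity convert the sign-weighted squarefree count at $n=\infty$ into the plain total count $P_{d,1}$ at $n=1$.
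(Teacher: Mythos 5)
Your proof is correct, and part (1) takes a genuinely different route from the paper. The paper derives (1) from liminal reciprocity: it rewrites $\sum_{\lambda\vdash d}\sgn_d(\lambda)T_{\lambda,1}(q)$ via Theorem \ref{thm main reciprocity} as $(-1)^dP_{d,\infty}^\sfr\big(\tfrac{1}{1-\frac{1}{q}}\big)$ and then quotes the closed form for $P_{d,\infty}^\sfr$ from Theorem \ref{thm converge} (2) --- i.e.\ it is exactly your argument for part (2) run through the reciprocity in the opposite direction. You instead evaluate the sign-weighted Euler product directly from the cyclotomic identity \eqref{eqn cyclo}, getting $F(t)=\tfrac{1+qt}{1-qt^2}$ and hence $q^{\lceil d/2\rceil}$ for the first moment; this is self-contained, elementary, and is essentially the zeta-function computation of Carlitz that the paper cites as an alternative derivation. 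What the paper's route buys is the point of the example --- exhibiting (1) and (2) as reciprocal images of each other under $q\mapsto\tfrac{1}{1-\frac{1}{q}}$ --- while your route buys independence from Theorem \ref{thm converge} (2) and a transparent source for the exponent $\lfloor d/2\rfloor$. Your part (2) coincides with the paper's proof, and your closing concern about passing the limit through the quotient is legitimate: the paper simply divides the two coefficientwise limits without comment, and your observation that the denominator's low-order terms stabilize (via $M_{d,n}(q)\equiv M_{d,\infty}(q)\bmod q^{n+1}$, so that $P_{d,n}^\sfr(q)$ has eventually constant $q$-adic valuation and its reciprocal converges in $\QQ(\!(q)\!)$) is the right way to make that step rigorous.
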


\begin{proof}
(1) Since $\sgn_d(f)$ depends only on the factorization type of $f$ we have
\[
    \sum_{f\in \poly_{d,1}(\FF_q)} \sgn_d(f) = \sum_{\lambda \vdash d} \sgn(\lambda) T_{\lambda,1}(q).
\]
Theorem \ref{thm main reciprocity} gives the identity
\[
    (-1)^{\ell(\lambda)}T_{\lambda,1}(q) = T_{\lambda,\infty}^\sfr\Big(\tfrac{1}{1-\frac{1}{q}}\Big),    
\]
from which we deduce for each $d\geq 1$
\begin{align*}
    \sum_{\lambda \vdash d} \sgn(\lambda) T_{\lambda,1}(q) &= \sum_{\lambda \vdash d} (-1)^d (-1)^{\ell(\lambda)} T_{\lambda,1}(q)\\
    &= \sum_{\lambda \vdash d} (-1)^d T_{\lambda,\infty}^\sfr\Big(\tfrac{1}{1-\frac{1}{q}}\Big)\\
    &=(-1)^d P_{d,\infty}^\sfr\Big(\tfrac{1}{1-\frac{1}{q}}\Big).
\end{align*}
Theorem \ref{thm converge} (2) tells us
\[
    P_{d,\infty}^\sfr(q) = (-1)^d \Big(\tfrac{1}{1-\frac{1}{q}}\Big)^{\lfloor \frac{d +1}{2} \rfloor}.
\]
Thus,
\[ 
    \sum_{\lambda \vdash d} \sgn_d(\lambda) T_{\lambda,1}(q) = (-1)^d P_{d,\infty}^\sfr\Big(\tfrac{1}{1-\frac{1}{q}}\Big) = q^{\lfloor \frac{d + 1}{2}\rfloor}.
\]
Since $P_{d,1}(q) = q^d$ and $d - \lfloor (d+1)/2 \rfloor = \lfloor d/2 \rfloor$ it follows that
\[
    E_{d,1}(\sgn_d) = \frac{1}{P_{d,1}(q)}\sum_{f\in \poly_{d,1}(\FF_q)} \sgn(f) = \frac{1}{q^{\lfloor d/2 \rfloor}}.
\]

\noindent (2) For each $n \geq 1$,
\[
    E_{d,n}^\sfr(\sgn_d) := \frac{1}{P_{d,n}^\sfr(q)}\sum_{f\in \poly_{d,n}^\sfr(\FF_q)} \sgn_d(f) =  \frac{1}{P_{d,n}^\sfr(q)}\sum_{\lambda \vdash d} \sgn(\lambda) T_{\lambda,n}^\sfr(q).
\]
Taking a limit as $n \rightarrow\infty$,
\[
    E_{d,\infty}^\sfr(\sgn_d) = \frac{1}{P_{d,\infty}^\sfr(q)} \sum_{\lambda\vdash d} \sgn_d(\lambda) T_{\lambda,\infty}^\sfr(q).
\]
Theorem \ref{thm main reciprocity} gives us
\[
    (-1)^{\ell(\lambda)}T_{\lambda,\infty}^\sfr(q) = T_{\lambda,1}\Big(\tfrac{1}{1-\frac{1}{q}}\Big).
\]
Therefore,
\begin{align*}
    \sum_{\lambda \vdash d}\sgn_d(\lambda)T_{\lambda,\infty}^\sfr(q) &= \sum_{\lambda \vdash d} (-1)^d(-1)^{\ell(\lambda)}T_{\lambda,\infty}^\sfr(q)\\
    &=\sum_{\lambda\vdash d} (-1)^d T_{\lambda,1}\Big(\tfrac{1}{1-\frac{1}{q}}\Big)\\ 
    &= (-1)^d\Big(\tfrac{1}{1-\frac{1}{q}}\Big)^d.
\end{align*}
Since $P_{d,\infty}^\sfr(q) = (-1)^d\Big(\tfrac{1}{1-\frac{1}{q}}\Big)^{\lfloor(d+1)/2\rfloor}$ and $d - \lfloor (d + 1)/2 \rfloor = \lfloor d/2 \rfloor$ we conclude that
\[ 
    E_{d,\infty}^\sfr(\sgn_d) = \frac{1}{P_{d,\infty}^\sfr(q)}\sum_{\lambda \vdash d}\sgn_d(\lambda)T_{\lambda,\infty}^\sfr(q) = \bigg(\frac{1}{1-\frac{1}{q}}\bigg)^{\lfloor d/2 \rfloor}.
\] 
\end{proof}

Note that Theorem \ref{thm univariate twisted gl} (1) tells us that
\[
    E_{d,1}(\sgn_d) = \sum_{k=0}^{d-1}\frac{\langle \sgn_d,\psi_d^k\rangle}{q^k}.
\]
Comparing this with Proposition \ref{sgn prop} (1) it follows that $H^{2k}(\PConf_d(\RR^3),\QQ)$ has a one dimensional $\sgn_d$ component when $k = \lfloor d/2 \rfloor$ and no $\sgn_d$ component for any other value of $k$.

The sign function $\sgn_d$ is closely related to the \emph{Liouville function} $\lambda$ studied by Carlitz \cite{CarlitzA, CarlitzB} in the context of polynomials in $\FF_q[x]$. In particular, if $f(x) \in \poly_{d,1}(\FF_q)$
\[
    \lambda(f) = (-1)^d\sgn_d(f).
\]
Carlitz \cite[(ii) pg. 121]{CarlitzA}\cite[Sec. 3]{CarlitzB} computes the first moment of the Liouville function using zeta functions. Proposition \ref{sgn prop} may also be deduced from his result. We thank Ofir Gorodetsky for bringing this work to our attention.

\subsection{The $S_d$-representations $\Sigma_d^k$}
\label{section liminal reps}

Theorem \ref{thm liminal twisted GL} relates the limiting first moments of factorization statistics on squarefree polynomials with a family of symmetric group representations $\Sigma_d^k$. Recall that
\[
    \Sigma_d^k = \bigoplus_{j=k}^{d-1} \sgn_d \otimes H^{2j}(\PConf_d(\RR^3),\QQ)^{\oplus \binom{j}{k}}.
\]
We conclude with Proposition \ref{liminal properties} which records some observations about the representations $\Sigma_d^k$.

\begin{prop}
\label{liminal properties}
Let $\sigma_d^k$ be the character of $\Sigma_d^k$. Then
\begin{enumerate}
    \item The dimension of $\Sigma_d^k$ is
    \[ 
        \dim \Sigma_d^k = \sum_{i=k}^{d-1}\stir{d}{d-i}\binom{i}{i - k},
    \] 
    where $\stir{m}{n}$ is an unsigned Stirling number of the first kind.
    
    \item The representation
    \[
        \bigoplus_{k=0}^{d-1} \Sigma_d^k 
    \]
    has dimension $(2d - 1)!! = (2d - 1)(2d - 3)\cdots 3 \cdot 1$.
    
    \item $\Sigma_d^0$ is isomorphic to the regular representation $\QQ[S_d]$.
\end{enumerate}
\end{prop}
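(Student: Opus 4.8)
The plan is to reduce all three statements to the graded dimensions $\dim H^{2j}(\PConf_d(\RR^3),\QQ)$ and, for part (3), to the $S_d$-character of the total cohomology. The single key input is the identity $\dim H^{2j}(\PConf_d(\RR^3),\QQ) = \stir{d}{d-j}$, the unsigned Stirling number of the first kind counting permutations of $d$ letters into $d-j$ cycles. This follows from the classical Poincaré polynomial $\sum_{j\geq 0}\dim H^{2j}(\PConf_d(\RR^3),\QQ)\,s^j = \prod_{i=1}^{d-1}(1+is)$ together with the rising-factorial expansion $\sum_{k=0}^d \stir{d}{k}x^k = x(x+1)\cdots(x+d-1)$; alternatively it falls out of \cite[Thm. 2.1]{Hyde} by taking $\lambda=(1^d)$, since $T_{(1^d),1}(q)=\multi{M_{1,1}(q)}{d}=\tfrac{1}{d!}q(q+1)\cdots(q+d-1)$ forces $\sum_k \dim H^{2k}(\PConf_d(\RR^3),\QQ)\,q^{d-k} = \prod_{i=0}^{d-1}(q+i)$.

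Granting this, part (1) is immediate: tensoring with $\sgn_d$ preserves dimension and dimension is additive over direct sums, so the definition of $\Sigma_d^k$ gives $\dim\Sigma_d^k = \sum_{j=k}^{d-1}\binom{j}{k}\dim H^{2j}(\PConf_d(\RR^3),\QQ) = \sum_{i=k}^{d-1}\stir{d}{d-i}\binom{i}{i-k}$, after using $\binom{j}{k}=\binom{j}{j-k}$ and relabeling. For part (2) I would sum the part (1) formula over $k$ and exchange the order of summation; the inner sum $\sum_{k=0}^{j}\binom{j}{k}=2^j$ collapses the total to $\sum_{j=0}^{d-1}2^j\,\stir{d}{d-j}$, which is precisely the Poincaré polynomial above evaluated at $s=2$, namely $\prod_{i=1}^{d-1}(1+2i)=3\cdot 5\cdots(2d-1)=(2d-1)!!$.

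Part (3) is the one genuine point, since it asserts an isomorphism of representations rather than an equality of dimensions. Because $\binom{j}{0}=1$, the definition gives $\Sigma_d^0 = \sgn_d\otimes\bigoplus_{j=0}^{d-1}H^{2j}(\PConf_d(\RR^3),\QQ)$, so its character is $\sgn_d\cdot\chi$, where $\chi$ is the character of the total cohomology. Since tensoring $\QQ[S_d]$ by a one-dimensional character returns $\QQ[S_d]$, it suffices to show $\chi$ is the regular character, i.e.\ $\chi(\lambda)=d!$ for $\lambda=(1^d)$ and $\chi(\lambda)=0$ otherwise. I would obtain this from \cite[Thm. 2.1]{Hyde} at $q=1$: that result reads $T_{\lambda,1}(q)=\tfrac{1}{z_\lambda}\sum_{k=0}^{d-1}\psi_d^k(\lambda)q^{d-k}$ with $\psi_d^k$ the character of $H^{2k}(\PConf_d(\RR^3),\QQ)$, so $\chi(\lambda)=\sum_k\psi_d^k(\lambda)=z_\lambda\,T_{\lambda,1}(1)$. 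The necklace formula \eqref{eq2} gives $M_{1,1}(1)=1$ and $M_{j,1}(1)=0$ for $j>1$ (since $\sum_{e\mid j}\mu(e)$ vanishes for $j>1$), whence the product formula for $T_{\lambda,1}$ forces $T_{\lambda,1}(1)=1$ when $\lambda=(1^d)$ and $T_{\lambda,1}(1)=0$ otherwise; combined with $z_{(1^d)}=d!$ this is exactly the regular character.

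The hard part will be part (3): upgrading a dimension count to an isomorphism of $S_d$-representations requires controlling $\chi$ on every conjugacy class, and the cleanest route is the $q=1$ specialization of the character identity from \cite{Hyde} together with the vanishing of the necklace polynomials $M_{j,1}(1)$ for $j>1$. Parts (1) and (2) are then bookkeeping built on the single Stirling-number identity for $\dim H^{2j}(\PConf_d(\RR^3),\QQ)$.
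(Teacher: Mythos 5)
Your proposal is correct, but it reaches the three statements by a different route than the paper. For part (1) the paper never touches the Betti numbers of $\PConf_d(\RR^3)$ directly: it computes $\sigma_d^k((1^d))$ by comparing two expressions for the liminal type polynomial $T_{(1^d),\infty}^\sfr(q)$ --- one coming from Theorem \ref{thm rep interp}, the other from expanding $\binom{-\frac{1}{1-\frac{1}{q}}}{d}$ via the Stirling-number definition --- and matching coefficients. You instead feed the single identity $\dim H^{2j}(\PConf_d(\RR^3),\QQ)=\stir{d}{d-j}$ straight into the definition \eqref{eqn rep def} of $\Sigma_d^k$; this bypasses the liminal machinery entirely and makes (1) a one-line bookkeeping step, at the cost of importing (or rederiving, as you do from \cite[Thm. 2.1]{Hyde} at $\lambda=(1^d)$) the Poincar\'e polynomial $\prod_{i=1}^{d-1}(1+is)$. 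Part (2) is essentially the same computation in both treatments: the paper evaluates the identity \eqref{eqn dim} at $q=\tfrac12$ to get $2^dd!\binom{d-\frac12}{d}=(2d-1)!!$, while you evaluate the Poincar\'e polynomial at $s=2$; these are the same specialization in different clothing. For part (3) the paper simply cites \cite[Thm. 2.8]{Hyde} for the isomorphism $\bigoplus_j H^{2j}(\PConf_d(\RR^3),\QQ)\cong\QQ[S_d]$, whereas you reprove it by specializing \cite[Thm. 2.1]{Hyde} at $q=1$ and using $M_{j,1}(1)=\frac{1}{j}\sum_{e\mid j}\mu(e)=0$ for $j>1$ to see that the total character vanishes off the identity class; that argument is sound (the product formula for $T_{\lambda,1}$ kills every class with a part of size $>1$, and $z_{(1^d)}T_{(1^d),1}(1)=d!$), and it has the merit of being self-contained relative to the character identity rather than relying on the cited isomorphism. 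Both approaches are valid; the paper's has the advantage of exercising Theorem \ref{thm rep interp} (the result the proposition is meant to illuminate), while yours is shorter and makes the dependence on the configuration-space Betti numbers explicit.
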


Note that the sequence $\dim \Sigma_d^k$ appears as A088996 in the \emph{Online Encyclopedia of Integer Sequences} \cite{oeis}.

\begin{proof}
(1) The dimension of a representation is given by evaluating its character on the identity, hence
\[
    \dim \Sigma_d^k = \sigma_d^k((1^d)).
\]
Theorem \ref{thm rep interp} implies that
\[
    T_{(1^d),\infty}^\sfr(q) = \frac{1}{d!(1 - q)^d}\sum_{k=0}^{d-1}(-1)^k\sigma_d^k((1^d))q^{d-k}.
\]
On the other hand, we may compute $T_{(1^d),\infty}^\sfr(q)$ directly as
\[
    T_{(1^d),\infty}^\sfr(q) = \binom{M_{d,\infty}(q)}{d} = \binom{-\frac{1}{1 - \frac{1}{q}}}{d}.
\]
The \emph{unsigned Stirling numbers of the first kind} are defined to be the coefficients in the expansion of a binomial coeffcient $\binom{x}{d}$,
\[
    \binom{x}{d} = \frac{1}{d!}\sum_{k=0}^{d-1} (-1)^k \stir{d}{d-k}x^{d-k}.
\]
Thus,
\begin{align*}
    T_{(1^d),\infty}^\sfr(q) &= \frac{1}{d!}\sum_{i=0}^{d-1} (-1)^i \stir{d}{d-i}\left(-\frac{1}{1 - \frac{1}{q}}\right)^{d-i}\\
    &= \frac{1}{d!(1 - q)^d}\sum_{i=0}^{d-1} (-1)^i \stir{d}{d-i}q^{d-i}(1 - q)^i\\
    &= \frac{1}{d!(1 - q)^d}\sum_{i=0}^{d-1}\sum_{j=0}^i (-1)^{i+j} \stir{d}{d-i}\binom{i}{j}q^{d-(i - j)}.
\end{align*}
Let $k = i - j$ and write the sum in terms of $i$ and $k$ to get
\[
    T_{(1^d),\infty}^\sfr(q) = \frac{1}{d!(1 - q)^d}\sum_{k=0}^{d-1} (-1)^k \left(\sum_{i=k}^{d-1}\stir{d}{d-i}\binom{i}{i - k}\right)q^{d-k}.
\]
Comparing coefficients in our two expressions for $T_{(1^d),\infty}^\sfr(q)$ we conclude that
\[
    \dim \Sigma_d^k = \sigma_d^k((1^d)) = \sum_{i=k}^{d-1}\stir{d}{d-i}\binom{i}{i - k}.
\]

(2) Let $\psi_d^k$ be the character of $H^{2k}(\PConf_d(\RR^3),\QQ)$. Then using the definition of $\Sigma_d^k$ and switching the order of summation we have
\[
    \sum_{k=0}^{d-1} \sigma_d^k((1^d)) = \sum_{k=0}^{d-1} \sum_{j=k}^d \binom{j}{k} \psi_d^j((1^d))
    = \sum_{j=0}^{d-1} \sum_{k=0}^j \binom{j}{k} \psi_d^j((1^d))
    = \sum_{j=0}^{d-1} 2^j \psi_d^j((1^d)).
\]
Note that by Theorem \ref{thm univariate twisted gl} (1),
\begin{equation}
\label{eqn dim}
   \sum_{j=0}^{d-1} \frac{\psi_d^j((1^d))}{q^j} = d!\frac{T_{(1^d),1}(q)}{q^d} = \frac{d!}{q^d}\binom{q + d - 1}{d}.
\end{equation} 
Evaluating \eqref{eqn dim} at $q = \frac{1}{2}$ implies
\[
    \sum_{j=0}^{d-1} 2^j\psi_d^j((1^d)) = 2^d d! \binom{d - \frac{1}{2}}{d} = (2d - 1)(2d - 3)\cdots 3 \cdot 1 = (2d-1)!!.
\]
Therefore $\dim \bigoplus_{k=0}^d \Sigma_d^k = (2d-1)!!$.\\

(3) By definition we have
\[
    \Sigma_d^0 \cong \sgn_d \otimes \bigoplus_{j=0}^{d-1} H^{2j}(\PConf_d(\RR^3),\QQ).
\]
In \cite[Thm. 2.8]{Hyde} we showed that
\[
    \bigoplus_{j=0}^{d-1} H^{2j}(\PConf_d(\RR^3),\QQ) \cong \QQ[S_d],
\]
where $\QQ[S_d]$ is the regular representation. The claim follows from 
\[
    \sgn_d \otimes \QQ[S_d] \cong \QQ[S_d].
\]
\end{proof}


\begin{thebibliography}{99}

\bibitem{Bodin}
A. Bodin, Number of irreducible polynomials in several variables over finite fields. \emph{Am. Math. Mon.} \textbf{115} (2008) no. 7, 653-660.

\bibitem{CarlitzA}
L. Carlitz, The arithmetic of polynomials in a Galois field, \emph{Am. J. Math.}, \textbf{54}, no. 1, (1932), 39-50.

\bibitem{CarlitzB}
L. Carlitz, The arithmetic of polynomials in a Galois field, \emph{Proc. Natl. Acad. Sci. U.S.A.}, \textbf{17}, no. 2, (1931), 120-122.

\bibitem{Carlitz 1}
L. Carlitz, The distribution of irreducible polynomials in several indeterminates. \emph{Illinois J. Math.} \textbf{7} no. 3 (1963) 371-375.

\bibitem{Carlitz 2}
L. Carlitz, The distribution of irreducible polynomials in several indeterminates II. \emph{Canad. J. Math.} \textbf{17} (1965) 261-266.

\bibitem{CEF}
T. Church, J. Ellenberg, and B. Farb. Representation stability in cohomology and asymptotics
for families of varieties over finite fields. \emph{Contemp. Math.} \textbf{620} (2014):
1-54.

\bibitem{Cohen}
S. D. Cohen, The distribution of irreducible polynomials in several indeterminates over a finite field. \emph{P. Edinburgh Math. Soc.} \textbf{16} no. 1 (1968) 1-17.

\bibitem{Hou Mullen}
X.-D. Hou, G. Mullen, Number of irreducible polynomials and pairs of relatively
prime polynomials in several variables over finite fields. \emph{Finite Fields Appl.} \textbf{15} (2009) 304-331.

\bibitem{Hyde}
T. Hyde, Polynomial factorization statistics and point configurations in $\RR^3$. arXiv preprint, arXiv:1802.00305 (2018).

\bibitem{Hyde Lagarias}
T. Hyde, J. C. Lagarias, Polynomial splitting measures and cohomology of the pure braid group. \emph{Arnold Math. J.} (2017), 1-31.

\bibitem{Met Rota}
N. Metropolis, G.-C. Rota, Witt vectors and the algebra of necklaces. \emph{Adv. Math.} \textbf{50} no. 2 (1983), 95-125.

\bibitem{oeis}
\emph{The On-Line Encyclopedia of Integer Sequences}, published electronically at \texttt{https://oeis.org}, (2018), Sequence A088996.

\bibitem{Rosen}
M. Rosen, \emph{Number theory in function fields}. \textbf{120} Springer Science \& Business Media,  (2013).

\bibitem{Stanley 1}
R. P. Stanley, Combinatorial reciprocity theorems. \emph{Adv. Math.} \textbf{14} no. 2, (1974) 194-253.

\bibitem{Ziegler}
J. von Zur Gathen, A. Viola, and K. Ziegler, Counting reducible, powerful, and relatively irreducible multivariate polynomials over finite fields. \emph{Siam J. Discrete Math.} \textbf{27} no. 2, (2013) 855-891.

\bibitem{Wan}
D. Wan, Zeta functions of algebraic cycles over finite fields, \emph{Manuscripta Math.} \textbf{74} (1992) 413-444.

\end{thebibliography}
\end{document}